\documentclass{amsart}
\pdfoutput=1

\usepackage[backend=biber,maxbibnames=5,maxalphanames=5,style=alphabetic,bibencoding=utf8,giveninits,url=false,isbn=false]{biblatex}
\addbibresource{SpaceTimeHeat.bib}
\AtBeginBibliography{\small}

\usepackage{lineno}
\usepackage[all,hyperref]{paper_diening}

\usepackage{pgfplots}
\usepackage{tikz}
\usepackage{pgf}

\newcommand{\nodes}{\mathcal{N}}

\newcommand{\tria}{\mathcal{T}}

\newcommand{\dx}{\,\mathrm{d}x}
\newcommand{\ds}{\,\mathrm{d}s}

\makeatletter
\@namedef{subjclassname@2020}{%
  \textup{2020} Mathematics Subject Classification}
\makeatother

\newcommand*\linenomathpatch[1]{%
  \cspreto{#1}{\linenomath}%
  \cspreto{#1*}{\linenomath}%
  \csappto{end#1}{\endlinenomath}%
  \csappto{end#1*}{\endlinenomath}%
}

\linenomathpatch{equation}
\linenomathpatch{gather}
\linenomathpatch{multline}
\linenomathpatch{align}
\linenomathpatch{alignat}
\linenomathpatch{flalign}

\begin{document}

\author[R.\ Stevenson]{Rob Stevenson}
\address[R.\ Stevenson]{Korteweg-de Vries (KdV) Institute for Mathematics, University of Amsterdam, PO Box 94248, 1090 GE Amsterdam, The Netherlands}
\email{r.p.stevenson@uva.nl}

\author[J.\ Storn]{Johannes Storn}
\address[J.\ Storn]{Department of Mathematics, Bielefeld University, Postfach 10 01 31, 33501 Bielefeld, Germany}
\email{jstorn@math.uni-bielefeld.de}

\thanks{
This research has been supported by the Netherlands Organization for Scientific Research (NWO) under contract.\ no.\ SH-208-11, by the NSF Grant DMS ID 1720297, and by the Deutsche Forschungsgemeinschaft (DFG, German Research Foundation) -- SFB 1283/2 2021 -- 317210226.}

\subjclass[2020]{
 	65D05, 
 	65M12,  
    65M15, 
    65M60,  
}
\keywords{interpolation operator, parabolic problems, heat equation}

\title{Interpolation operators for parabolic Problems}

\begin{abstract}
We introduce interpolation operators with approximation and stability properties suited for parabolic problems in primal and mixed formulations. We derive localized error estimates for tensor product meshes (occurring in classical time-marching schemes) as well as locally in space-time refined meshes.
\end{abstract}

\maketitle

\section{Introduction}
In recent years simultaneous space-time variational formulations for parabolic problems became more and more popular. Besides practical aspects like highly parallelizable computations \cite{DyjaGanapathysubramanianVanderZee18,NeumuellerSmears19,HoferLangerNeumuellerSchneckenleitner19,VenetieWesterdiep21}
the ansatz offers analytical advantages including quasi-optimality of the discrete solution \cite{TantardiniVeeser16} (also called symmetric error estimates in \cite{DupontLiu02,ChrysafinosWalkington06}). This property motivates adaptive time stepping \cite{Feischl22}, adaptive wavelet schemes \cite{RekatsinasStevenson19}, adaptive wavelet-in-time and finite-element-in-space approaches \cite{StevensonVenetieWesterdiep21}, and even adaptive mesh refinements locally in space-time \cite{LangerSchafelner20,LangerSteinbachTroltzschYang21,DieningStorn21,GantnerStevenson22}. While numerical experiments suggest superiority of the latter approach for singular solutions, theoretical results are restricted to plain convergence \cite{GantnerStevenson20} but do not verify optimal convergence rates as they do for elliptic problems \cite{Stevenson07,CarstensenFeischlPagePraetorius14}.
Motivated by the extension of such optimality results to parabolic problem, this paper introduces and investigates a main ingredient in the analysis of adaptive schemes for parabolic problems like the heat equation in a time-space cylinder $Q = \mathcal{J} \times \Omega$, namely interpolation operators suited for the norm 
\begin{align*}
\lVert \bigcdot \rVert_X \coloneqq  
\big(\lVert \partial_t \, \bigcdot \rVert_{L^2(\mathcal{J};H^{-1}(\Omega))}^2  + \lVert \nabla_x \bigcdot \rVert_{L^2(Q)}^2\big)^{1/2}.
\end{align*} 
Additionally, we introduce an interpolation operator for first-order formulations of the heat equation satisfying a beneficial commuting diagram property.
On tensor product meshes the interpolation operators are stable and have optimal approximation properties. 
We give upper bounds for the interpolation errors and emphasize the need of parabolic scaling if the solution is rough in time. The localization of the interpolation error in space leads to unavoidable weights in terms of negative powers of the local mesh size. Under realistic regularity assumptions we can overcome these negative powers due to parabolic scaling. Unfortunately, this strategy cannot be applied to the interpolation error of adaptively refined meshes. In fact, we illustrate that any (local) interpolation operator experiences these difficulties.
Overall, this paper's main contributions are the following. 
\begin{itemize}
\item We present approximation properties suited for parabolic problems in Section~\ref{sec:LocEstimates}--\ref{sec:interpolInTimeAndSpace}.
\item We introduce an interpolation operator with optimal approximation properties on tensor product meshes in Section~\ref{sec:IX}. 
\item We introduce an interpolation operator suited for first-order formulations with optimal approximation properties on tensor product meshes and a commuting diagram property in Section~\ref{subsec:ILambda}.
\item We introduce an interpolation operator for locally in space-time refined meshes and discuss its stability in Section~\ref{sec:IXirreg}.
\end{itemize}

\section{Bochner spaces and their discretization}\label{sec:Discreatization}
This section introduces Bochner spaces, suitable discretizations by finite elements, and their underlying partitions.
\subsection{Bochner spaces}
Our analysis is motivated by the approximation of parabolic problems like the heat equation. Given a time-space cylinder with bounded time interval $\mathcal{J} = [0,T] \subset \mathbb{R}^d$ and bounded Lipschitz domain $\Omega \subset \mathbb{R}^d$, this problem seeks with given right-hand side $f\colon Q\to \mathbb{R}$ and initial data $u_0 \colon \Omega \to \mathbb{R}$ the solution $u \colon Q \to \mathbb{R}$ to
\begin{align}\label{eq:Heat}
\partial_t u - \Delta_x u = f\text{ in }Q,\qquad u = 0 \text{ on }\mathcal{J} \times \partial \Omega,\qquad u(0) = u_0\text{ in }\Omega.
\end{align}
A suitable analytical setting relies on Sobolev-Bochner spaces. 
Therefore, we set the space $H^{-1}(\Omega)$ as the dual of the Sobolev space $H^1_0(\Omega)$ equipped with norm $\lVert \nabla_x \bigcdot \rVert_{L^2(\Omega)}$ and dual pairing $\langle \bigcdot ,\bigcdot\rangle_{\Omega} \coloneqq \langle \bigcdot ,\bigcdot\rangle_{H^{-1},H^1_0(\Omega)}$ which equals the $L^2$ inner product for smooth functions.
Given $V \in \lbrace H^1_0(\Omega), L^2(\Omega);H^{-1}(\Omega) \rbrace$, we set
\begin{align*}
\begin{aligned}
\lVert p \rVert_{L^2(\mathcal{J};V)}^2 & \coloneqq \int_\mathcal{J} \lVert p(s)\rVert_V^2\ds&&\text{for all }p\colon \mathcal{J} \to V,\\
\lVert v \rVert_{H^1(\mathcal{J};V)}^2 & \coloneqq\lVert v \rVert_{L^2(\mathcal{J};V)}^2 + \lVert \partial_t v \rVert_{L^2(\mathcal{J};V)}^2&&\text{for all }v\colon \mathcal{J} \to V.\\
\end{aligned}
\end{align*}
The Bochner spaces read
\begin{align*}
\begin{aligned}
L^2(\mathcal{J};V) & \coloneqq \lbrace p\colon \mathcal{J} \to V\colon \lVert p \rVert_{L^2(\mathcal{J};V)} < \infty \rbrace,\\
H^1(\mathcal{J};V) & \coloneqq \lbrace v \colon \mathcal{J} \to V \colon  \lVert v \rVert_{H^1(\mathcal{J};V)}  < \infty \rbrace.
\end{aligned}
\end{align*} 
We can identify $L^2(\mathcal{J};L^2(\Omega)) = L^2(Q)$. Moreover, we have the following.
\begin{remark}[Tensor spaces]\label{rem:TensorBochner}
Bochner spaces can be seen as closure of algebraic tensor product spaces \cite[Rem.~64.24]{ErnGuermond21c}, i.e., for $V \in \lbrace H^1_0(\Omega), L^2(\Omega);H^{-1}(\Omega)\rbrace$
\begin{align*}
L^2(\mathcal{J}) \otimes V &\coloneqq \textup{span}\lbrace v_tv_x \colon v_t\in L^2(\mathcal{J})\text{ and }v_x \in V\rbrace&& \text{ is dense in }L^2(\mathcal{J};V),\\
H^1(\mathcal{J}) \otimes V &\coloneqq \textup{span}\lbrace v_tv_x \colon v_t\in H^1(\mathcal{J})\text{ and }v_x \in V\rbrace&& \text{ is dense in }H^1(\mathcal{J};V).
\end{align*}
\end{remark}
We are particularly interested in the space
\begin{align}\label{eq:defX}
X \coloneqq L^2(\mathcal{J};H^1_0(\Omega)) \cap H^1(\mathcal{J};H^{-1}(\Omega)).
\end{align} 
\begin{lemma}[Embedding]\label{lem:embedding}
We have for all $v\in X$ and $t\in \mathcal{J} = [0,T]$ 
\begin{align*}
\lVert v(t) \rVert_{L^2(\Omega)}^2 \leq T^{-1} \lVert v\rVert_{L^2(Q)}^2 + \lVert\nabla_x v\rVert_{L^2(Q)}^2  +  \lVert \partial_t v \rVert_{L^2(\mathcal{J};H^{-1}(\Omega))}^2.
\end{align*}
\end{lemma}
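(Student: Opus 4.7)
The plan is to exploit the well-known identity $\tfrac{d}{dt}\lVert v(t)\rVert_{L^2(\Omega)}^2 = 2 \langle \partial_t v(t), v(t)\rangle_\Omega$, valid for $v \in X$, which also implies that the map $t \mapsto \lVert v(t)\rVert_{L^2(\Omega)}^2$ is absolutely continuous. To reduce the technicalities, I would first establish the formula for elements of the dense tensor subspace $H^1(\mathcal{J}) \otimes H^1_0(\Omega)$ from Remark~\ref{rem:TensorBochner} (where the identity reduces to the scalar product rule) and then pass to the limit in $X$; alternatively, I would cite the corresponding embedding result from a standard reference (e.g.\ Ern--Guermond).

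Next, I would pick a reference time $s_0 \in \mathcal{J}$ by the mean value theorem so that
\begin{equation*}
\lVert v(s_0) \rVert_{L^2(\Omega)}^2 = \frac{1}{T}\int_\mathcal{J} \lVert v(s)\rVert_{L^2(\Omega)}^2 \ds = \frac{1}{T}\lVert v\rVert_{L^2(Q)}^2,
\end{equation*}
which already accounts for the $T^{-1}\lVert v\rVert_{L^2(Q)}^2$ term on the right-hand side. For arbitrary $t\in \mathcal{J}$, the fundamental theorem of calculus applied to $s \mapsto \lVert v(s)\rVert_{L^2(\Omega)}^2$ yields
\begin{equation*}
\lVert v(t)\rVert_{L^2(\Omega)}^2 = \lVert v(s_0)\rVert_{L^2(\Omega)}^2 + \int_{s_0}^t 2 \langle \partial_t v(s), v(s)\rangle_\Omega \ds.
\end{equation*}

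Finally, I would estimate the duality product using $\lVert v(s)\rVert_{H^1_0(\Omega)} = \lVert \nabla_x v(s)\rVert_{L^2(\Omega)}$ together with Young's inequality in the form $2ab \leq a^2 + b^2$, giving $2\langle \partial_t v(s), v(s)\rangle_\Omega \leq \lVert \partial_t v(s)\rVert_{H^{-1}(\Omega)}^2 + \lVert \nabla_x v(s)\rVert_{L^2(\Omega)}^2$. Extending the integration from $[s_0,t]$ (or $[t,s_0]$) to all of $\mathcal{J}$ and combining with the formula for $\lVert v(s_0)\rVert_{L^2(\Omega)}^2$ produces the desired inequality. The only delicate point is the initial identity for $\tfrac{d}{dt}\lVert v\rVert_{L^2(\Omega)}^2$, since $\partial_t v$ lies only in $H^{-1}(\Omega)$ while $v$ lies in $H^1_0(\Omega)$; but this is precisely the content of the standard Lions--Magenes embedding $X \hookrightarrow C(\overline{\mathcal{J}}; L^2(\Omega))$ and requires no new work beyond an approximation argument on the tensor product subspace.
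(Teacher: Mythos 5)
Your proposal is correct and follows essentially the same route as the paper: the fundamental theorem of calculus for $s \mapsto \lVert v(s)\rVert_{L^2(\Omega)}^2$, Young's inequality on the duality pairing, and comparison with the time-average $T^{-1}\lVert v\rVert_{L^2(Q)}^2$. The only cosmetic difference is that you select a reference time $s_0$ via the mean value theorem (which needs the continuity you invoke), whereas the paper simply integrates the resulting inequality over all $\tau \in \mathcal{J}$ and divides by $T$, achieving the same averaging without appealing to the mean value theorem.
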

\begin{proof}
This is a known result which we prove to stress the dependency on $T$ often hidden in textbooks. Let $v\in X$ and $t\in \mathcal{J}$. The fundamental theorem of calculus \cite[Thm.~64.31]{ErnGuermond21c} reveals for all $\tau \in \mathcal{J}$
\begin{align*}
\lVert v(t) \rVert_{L^2(\Omega)}^2& = \lVert v(\tau) \rVert_{L^2(\Omega)}^2 + 2 \int_t^\tau \langle \partial_t v,v\rangle_\Omega \ds \\
&\leq \lVert v(\tau) \rVert_{L^2(\Omega)}^2 + \int_t^\tau \big( \lVert \partial_t v(s) \rVert_{H^{-1}(\Omega)}^2 + \lVert \nabla_x v(s) \rVert_{L^2(\Omega)}^2 \big) \ds\\
& \leq \lVert v(\tau) \rVert_{L^2(\Omega)}^2 + \lVert \partial_t v  \rVert_{L^2(\mathcal{J};H^{-1}(\Omega))}^2 + \lVert \nabla_x v\rVert_{L^2(Q)}^2.
\end{align*}
An integration of the inequality over all $\tau \in \mathcal{J}$ concludes the proof.
\end{proof}
Given a right-hand side $f\in L^2(\mathcal{J};H^{-1}(\Omega))$ and initial data $u_0\in L^2(\Omega)$, the problem in \eqref{eq:Heat} has a unique solution $u\in X$ \cite[Thm.~5.1]{SchwabStevenson09}.
More precisely, the mapping $(f,u_0) \mapsto u$ is a linear isomorphism and so the norm of $u$ depends continuously on the data, that is,
\begin{align}\label{eq:Lis}
\begin{aligned}
\lVert u \rVert_X^2 & := \lVert u \rVert_{L^2(\mathcal{J};H^1_0(\Omega))}^2 + \lVert \partial_t u \rVert_{L^2(\mathcal{J};H^{-1}(\Omega))}^2\\
&\hphantom{:} = \lVert \nabla_x u \rVert_{L^2(Q)}^2 + \lVert \partial_t u \rVert_{L^2(\mathcal{J};H^{-1}(\Omega))}^2 \eqsim \lVert f \rVert_{L^2(\mathcal{J};H^{-1}(\Omega))}^2 + \lVert u_0 \rVert^2_{L^2(\Omega)}.
\end{aligned}
\end{align}
If the right-hand side is slightly smoother in space, that is  $f \in L^2(\mathcal{J};L^2(\Omega))$, we have for initial data $u_0 \in H^1_0(\Omega)$ the additional regularity property \cite[Sec.\ 4]{Dier15}
\begin{align}\label{eq:Reg1}
\lVert \Delta_x u \rVert_{L^2(Q)}^2 + \lVert \partial_t u \rVert_{L^2(Q)}^2 \lesssim \lVert f \rVert^2_{L^2(Q)} + \lVert \nabla_x u_0 \rVert_{L^2(\Omega)}^2.
\end{align}
If $f \in H^1(\mathcal{J};H^{-1}(\Omega))$ and $f(0) + \Delta_x u_0 \in L^2(\Omega)$, then $\xi = \partial_t u$ solves
\begin{align*}
\partial_t \xi - \Delta_x \xi = \partial_t f\text{ in }Q,\qquad \xi = 0 \text{ on }\mathcal{J} \times \partial \Omega,\qquad \xi(0) = f(0) + \Delta_x u_0\text{ in }\Omega. 
\end{align*}
Thus, \eqref{eq:Lis} leads to the bound
\begin{align}\label{eq:Reg2}
\begin{aligned}
&\lVert  \partial_t \nabla_x u \rVert_{L^2(Q)}^2 + \lVert \partial_t^2 u \rVert_{L^2(\mathcal{J};H^{-1}(\Omega))}^2\\
&\qquad\qquad\qquad\qquad \lesssim  \lVert \partial_t f \rVert_{L^2(\mathcal{J};H^{-1}(\Omega))}^2 + \lVert f(0) + \Delta_x u_0\rVert_{L^2(\Omega)}^2.
\end{aligned}
\end{align} 
Notice that  elliptic regularity results imply for convex or smooth domains $\Omega$ 
\begin{align}\label{eq:Reg3}
\lVert \nabla^2_x u \rVert_{L^2(Q)} \lesssim \lVert \Delta_x u \rVert_{L^2(Q)}.
\end{align}
The estimates in \eqref{eq:Reg1}--\eqref{eq:Reg3} provide some reasonable regularity assumptions.
\subsection{Triangulation}\label{subsec:tria}
Rather than using simplicial partitions of the time-space cylinder $Q = \mathcal{J} \times \Omega \subset \mathbb{R}^{d+1}$, we use partitions $\tria$ of $Q$ into cylindrical closed time-space cells $K=K_t\times K_x$ with time interval $K_t\subset \mathbb{R}$ and simplices $K_x\subset \mathbb{R}^{d}$ as in \cite{DieningStorn21,GantnerStevenson22}.
The following considerations motivate the use of such partitions.
\begin{itemize}
\item A special case of cylindrical partitions are tensor product meshes which typically occur in time-marching schemes and are thus of great interest.
\item The parabolic \Poincare inequality in Theorem~\ref{thm:paraPoincare} suggests the use of parabolically scaled meshes for irregular solutions. Thus, we want to allow for local mesh refinements such that the diameter of local cells in space direction $h_x$ and the length of cells in time direction $h_t$ satisfy
\begin{align*}
\begin{aligned}
h_t &\eqsim h_x^2&&\text{if we scale parabolically},\\
h_t &\eqsim h_x&&\text{if we scale equally}.
\end{aligned}
\end{align*}
Such refinements can easily be achieved with cylindrical meshes.
\item  The faces of each time-space cell in a cylindrical partition $\tria$ are either parallel or perpendicular to the time axis. This allows for the design of finite elements that are better suited for approximations in spaces like $L^2(\mathcal{J};H(\textup{div}_x,\Omega)) = \lbrace \tau \in L^2(Q;\mathbb{R}^d)\colon \textup{div}_x\, \tau \in L^2(Q)\rbrace$, where $\textup{div}_x$ denotes the divergence in space. This leads to significantly improved rates of convergence compared to finite elements on simplicial meshes; see \cite{GantnerStevenson22}.
\end{itemize} 
Throughout this paper we suppose that the partition $\tria$ of $Q = \mathcal{J} \times \Omega$ consists of time-space cells $K = K_t \times K_x \subset \mathbb{R}^{d+1}$ with shape regular $d$-simplices $K_x$. 
A special class of meshes satisfying these assumptions are tensor-product meshes. Given conforming partitions $\tria_t$ and $\tria_x$ of the time interval $\mathcal{J}$ and the domain $\Omega$ into shape-regular simplices, these meshes read
\begin{align}\label{eq:defTensorMesh}
\tria_\otimes = \tria_t \otimes \tria_x = \lbrace K_t \times K_x\colon K_t\in \tria_t\text{ and }K_x\in \tria_x\rbrace.
\end{align}
Besides these tensor product meshes, we discuss adaptively refined meshes with hanging vertices in Section~\ref{sec:IXirreg}.
\subsection{Finite element spaces}
Let $\tria$ be a partition of $Q$ as described in the previous subsection.
For all cells $K=K_t \times K_x \in \tria$ and polynomial degrees $k\in \mathbb{N}_0$ we set for $L \in \lbrace K,K_t,K_x\rbrace$ the space of polynomials
\begin{align*}
\mathbb{P}_k(L)\coloneqq \lbrace v_h \in L^2(L)\colon v_h\text{ is a polynomial of maximal degree }k\rbrace.
\end{align*}
Given polynomial degrees $k,\ell \in \mathbb{N}$, we discretize the space $X$ in \eqref{eq:defX} by
\begin{align}\label{eq:DefXh}
X_h \coloneqq X_h^{k,\ell} \coloneqq \lbrace v_h \in X\colon v_h|_K \in \mathbb{P}_k(K_t)\otimes \mathbb{P}_\ell(K_x)\text{ for all }K_t\times K_x \in \tria\rbrace.
\end{align}
A special class of meshes included in our analysis are tensor product meshes $\tria_\otimes = \tria_t \otimes \tria_x$ introduced in \eqref{eq:defTensorMesh}. We set the spaces
\begin{align}\label{eq:defFEspaces}
\begin{aligned}
\mathcal{L}^0_k(\tria_t) &\coloneqq \lbrace p_t \in L^2(\mathcal{J})\colon p_t|_{K_t} \in \mathbb{P}_k(K_t)\text{ for all }K_t \in \tria_t\rbrace,\\
\mathcal{L}^1_k(\tria_t) &\coloneqq \lbrace v_t \in H^1(\mathcal{J})\colon v_t|_{K_t} \in \mathbb{P}_k(K_t)\text{ for all }K_t \in \tria_t\rbrace,\\
\mathcal{L}^1_{\ell}(\tria_x) &\coloneqq \lbrace v_x \in H^1(\Omega)\colon v_x|_{K_x} \in \mathbb{P}_\ell(K_x)\text{ for all }K_x \in \tria_x\rbrace,\\
\mathcal{L}^1_{\ell,0}(\tria_x) &\coloneqq \lbrace v_x \in H^1_0(\Omega)\colon v_x|_{K_x} \in \mathbb{P}_\ell(K_x)\text{ for all }K_x \in \tria_x\rbrace.
\end{aligned}
\end{align}
If $\tria = \tria_\otimes$ is a tensor product mesh, the ansatz space in \eqref{eq:DefXh} equals
\begin{align*}
X_h = X_h^{k,\ell} = \mathcal{L}^1_k(\tria_t;\mathcal{L}^1_{\ell,0}(\tria_x)) \coloneqq \mathcal{L}^1_k(\tria_t)\otimes \mathcal{L}^1_{\ell,0}(\tria_x).
\end{align*}
\section{Local estimates}\label{sec:LocEstimates}
In this section we introduce several local estimates for functions on a time-space cell $K = K_t \times K_x$. The cell consists of a bounded time-interval $K_t \subset \mathbb{R}$ of length $h_t \coloneqq |K_t| >0$ and a simplex $K_x \subset \mathbb{R}^d$ with diameter $h_x \coloneqq \textup{diam}(K_x)$.
The space $H^{-1}(K_x)$ is defined as the dual of $H^1_0(K_x)$ with dual pairing $\langle \bigcdot ,\bigcdot\rangle_{K_x} \coloneqq \langle \bigcdot ,\bigcdot\rangle_{H^{-1}(K_x);H^1_0(K_x)}$ and dual norm
\begin{align*}
\lVert \xi \rVert_{H^{-1}(K_x)} \coloneqq \sup_{w\in H^1_0(K_x)} \frac{\langle \xi,w\rangle_{K_x}}{\lVert \nabla_x w\rVert_{L^2(K_x)}}\qquad\text{for all }\xi \in H^{-1}(K_x).
\end{align*}
This definition and Friedrichs' inequality lead to the upper bound
\begin{align}\label{eq:FriedrichsConsequance}
\lVert f \rVert_{H^{-1}(K_x)} \lesssim h_x \lVert f \rVert_{L^2(K_x)}\qquad\text{for all }f \in L^2(K_x).
\end{align}
The following lemma shows that these two terms are equivalent for polynomials.
\begin{lemma}[Inverse estimate]\label{lem:inverseEst}
Let $k\in \mathbb{N}_0$.
We have the upper bound
\begin{align*}
\lVert f_h \rVert_{L^2(K_x)} \lesssim h_x^{-1} \lVert f_h\rVert_{H^{-1}(K_x)}\qquad\text{for all }f_h \in \mathbb{P}_k(K_x). 
\end{align*}
The hidden constant depends solely on the degree $k$ and the shape regularity of $K_x$.
\end{lemma}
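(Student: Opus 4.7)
The plan is to reduce the estimate to a norm equivalence on a reference simplex by the standard affine scaling argument. The key observation is that on a fixed bounded reference domain $\widehat{K}_x$, both $\lVert \bigcdot \rVert_{L^2(\widehat{K}_x)}$ and $\lVert \bigcdot \rVert_{H^{-1}(\widehat{K}_x)}$ restrict to norms on the finite-dimensional space $\mathbb{P}_k(\widehat{K}_x)$ (the second is a norm because if $\langle \hat f, \hat w\rangle_{\widehat{K}_x}=0$ for every $\hat w \in H^1_0(\widehat{K}_x)$, then $\hat f = 0$ by the density of $H^1_0(\widehat{K}_x)$ in $L^2(\widehat{K}_x)$). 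Since any two norms on a finite-dimensional space are equivalent, there exists a constant $C_k$ depending only on $k$ and $\widehat{K}_x$ such that
\begin{align*}
\lVert \hat f \rVert_{L^2(\widehat{K}_x)} \leq C_k \, \lVert \hat f \rVert_{H^{-1}(\widehat{K}_x)} \qquad \text{for all } \hat f \in \mathbb{P}_k(\widehat{K}_x).
\end{align*}

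Next, I would transfer this bound to an arbitrary shape-regular simplex $K_x$ via an affine diffeomorphism $F\colon \widehat{K}_x \to K_x$ with $\lVert DF\rVert \eqsim h_x$ and $\lVert DF^{-1}\rVert \eqsim h_x^{-1}$, where the hidden constants depend only on the shape regularity of $K_x$. Setting $\hat f \coloneqq f_h \circ F$ and $\hat w \coloneqq w \circ F$ and carrying out the standard change of variables yields the scaling relations
\begin{align*}
\lVert f_h\rVert_{L^2(K_x)} \eqsim |\det DF|^{1/2} \, \lVert \hat f\rVert_{L^2(\widehat{K}_x)}, \qquad \lVert \nabla_x w\rVert_{L^2(K_x)} \eqsim h_x^{-1} |\det DF|^{1/2} \, \lVert \widehat{\nabla} \hat w\rVert_{L^2(\widehat{K}_x)},
\end{align*}
and, using that $F$ is a bijection between $H^1_0(\widehat{K}_x)$ and $H^1_0(K_x)$,
\begin{align*}
\lVert f_h\rVert_{H^{-1}(K_x)} = \sup_{w \in H^1_0(K_x)} \frac{\langle f_h, w\rangle_{K_x}}{\lVert \nabla_x w\rVert_{L^2(K_x)}} \eqsim h_x \, |\det DF|^{1/2} \, \lVert \hat f\rVert_{H^{-1}(\widehat{K}_x)}.
\end{align*}

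Combining the reference-simplex inequality with these three scaling identities gives
\begin{align*}
\lVert f_h\rVert_{L^2(K_x)} \eqsim |\det DF|^{1/2} \, \lVert \hat f\rVert_{L^2(\widehat{K}_x)} \lesssim |\det DF|^{1/2} \, \lVert \hat f\rVert_{H^{-1}(\widehat{K}_x)} \eqsim h_x^{-1} \, \lVert f_h\rVert_{H^{-1}(K_x)},
\end{align*}
which is the claimed estimate. The main technical point is to track the scaling of the $H^{-1}$-norm carefully, since this norm involves a supremum over test functions and is not simply a weighted $L^2$-norm; apart from that, the proof is routine. The dependence of the final constant on $k$ (through $C_k$) and on the shape regularity of $K_x$ (through the equivalence constants in the affine scaling) is exactly as asserted.
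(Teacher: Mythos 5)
Your proof is correct, but it is not comparable to anything in the paper at the level of detail: the paper does not prove this lemma at all, it simply cites \cite[Lem.~1]{FuehrerHeuerKarkulik21}. What you supply is therefore a genuine, self-contained argument, and its delicate point --- the scaling of the dual norm $\lVert \bigcdot \rVert_{H^{-1}(K_x)}$ under the affine map --- is handled properly: since $f_h\in L^2(K_x)$ the pairing is the $L^2$ inner product, composition with $F$ is a bijection of $H^1_0(K_x)$ onto $H^1_0(\widehat K_x)$, and the gradient seminorm picks up the factor $h_x^{-1}\lvert\det DF\rvert^{1/2}$, so the dual norm scales like $h_x\lvert\det DF\rvert^{1/2}$, exactly as you claim; the definiteness of $\lVert \bigcdot \rVert_{H^{-1}(\widehat K_x)}$ on $\mathbb{P}_k(\widehat K_x)$ via density of $H^1_0$ in $L^2$ is also fine. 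Two remarks on what the alternatives buy. First, your reference-element step rests on equivalence of norms in finite dimensions, which is non-constructive: the constant $C_k$ exists but is not tracked. The standard constructive route (used in residual a~posteriori analysis, and essentially the content of the cited lemma) is the bubble-function argument: with the element bubble $b_{K_x}$ one has $\lVert f_h\rVert_{L^2(K_x)}^2 \eqsim \langle f_h, b_{K_x} f_h\rangle_{K_x} \leq \lVert f_h\rVert_{H^{-1}(K_x)} \lVert \nabla_x (b_{K_x} f_h)\rVert_{L^2(K_x)} \lesssim h_x^{-1}\lVert f_h\rVert_{H^{-1}(K_x)}\lVert f_h\rVert_{L^2(K_x)}$, where the last step is the $H^1$--$L^2$ inverse estimate; this gives the same result with a constant that is in principle explicit. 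Second, your argument as written is per-element, which is all the lemma asserts; note only that the constants additionally depend on the spatial dimension $d$ (through the reference simplex), which is standard and harmless.
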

\begin{proof}
The proof can be found in \cite[Lem.~1]{FuehrerHeuerKarkulik21}.
\end{proof}
The following result is of crucial importance for the analysis of parabolic problems. It involves the integral mean
\begin{align*}
\langle f \rangle_{K} \coloneqq \dashint_K f \dx\qquad\text{for all }f\in L^2(K).
\end{align*}
The result is stated in a very general formulation in \cite[Lem.~2.9]{DieningSchwarzacherStroffoliniVerde17}. 
Rather than using the more general result, we give an alternative direct proof.
\begin{theorem}[Parabolic \Poincare{} inequality]\label{thm:paraPoincare}
All functions $v\in L^2(K_t;H^1(K_x)) \cap H^1(K_t;H^{-1}(K_x))$ satisfy
\begin{align*}
\lVert v - \langle v \rangle_K \rVert_{L^2(K)} \lesssim h_x\, \lVert \nabla_x v \rVert_{L^2(K)} + \frac{h_t}{h_x}\, \lVert \partial_t v\rVert_{L^2(K_t;H^{-1}(K_x))}.
\end{align*}
More general, we have for $k,\ell\in \mathbb{N}_0$ 
\begin{align*}
\min_{v_h \in \mathbb{P}_k(K_t;\mathbb{P}_\ell(K_x))}\lVert v - v_h \rVert_{L^2(K)} &\lesssim h_x\, \min_{v_x \in L^2(K_t;\mathbb{P}_\ell(K_x))} \lVert \nabla_x (v-v_x) \rVert_{L^2(K)}\\
&\quad + \frac{h_t}{h_x}\, \min_{v_t\in \mathbb{P}_k(K_t;H^{-1}(K_x))} \lVert \partial_t (v-v_t)\rVert_{L^2(K_t;H^{-1}(K_x))}. 
\end{align*}
The hidden constant depends solely on the polynomial degrees $k$ and $\ell$ as well as the shape regularity of $K_x$.
\end{theorem}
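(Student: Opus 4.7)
My plan rests on the observation that $\langle v\rangle_K$ is the $L^2(K)$-best constant approximation to $v$, so it suffices to exhibit any constant $c$ for which $\lVert v-c\rVert_{L^2(K)}$ obeys the stated bound. The naive choice based on the slicewise spatial mean $\bar v(t)\coloneqq \dashint_{K_x}v(t,x)\dx$ would use the decomposition $v-\langle v\rangle_K = (v-\bar v)+(\bar v-\langle \bar v\rangle_{K_t})$, apply spatial Poincar\'e to the first summand, and apply the one-dimensional Poincar\'e inequality in time to the second. The obstruction is that bounding $\bar v'(t)$ in terms of $\lVert \partial_t v(t)\rVert_{H^{-1}(K_x)}$ requires pairing $\partial_t v(t)$ with the constant $1$, which is not in $H^1_0(K_x)$. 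I would circumvent this by replacing the uniform weight $|K_x|^{-1}$ by a smooth cutoff $\eta\in C_c^\infty(K_x)\subset H^1_0(K_x)$ with $\int_{K_x}\eta\dx=1$, $\lVert\eta\rVert_{L^2(K_x)}\lesssim h_x^{-d/2}$, and $\lVert\nabla_x\eta\rVert_{L^2(K_x)}\lesssim h_x^{-d/2-1}$, setting $P(t)\coloneqq \int_{K_x}v(t,x)\eta(x)\dx$ and choosing $c\coloneqq \langle P\rangle_{K_t}$.

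Step one bounds $\lVert v-P\rVert_{L^2(K)}\lesssim h_x\lVert\nabla_x v\rVert_{L^2(K)}$. Splitting $v-P = (v-\bar v)+(\bar v-P)$, slicewise spatial Poincar\'e handles the first summand, and since $\int_{K_x}(|K_x|^{-1}-\eta)\dx=0$, the identity
\begin{equation*}
\bar v(t)-P(t)=\int_{K_x}\bigl(v(t,y)-\bar v(t)\bigr)\bigl(|K_x|^{-1}-\eta(y)\bigr)\dx
\end{equation*}
combined with Cauchy--Schwarz, the estimate $\lVert|K_x|^{-1}-\eta\rVert_{L^2(K_x)}\lesssim h_x^{-d/2}$, spatial Poincar\'e, and the factor $|K_x|^{1/2}\sim h_x^{d/2}$ picked up when passing from $L^2(K_t)$ to $L^2(K)$ gives the bound on the second summand.

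Step two bounds $\lVert P-c\rVert_{L^2(K)}\lesssim (h_t/h_x)\lVert\partial_t v\rVert_{L^2(K_t;H^{-1}(K_x))}$. Since $P-c$ depends on $t$ alone and has zero mean on $K_t$, the one-dimensional Poincar\'e inequality yields $\lVert P-c\rVert_{L^2(K_t)}\lesssim h_t\lVert P'\rVert_{L^2(K_t)}$. The key identity is $P'(t)=\langle\partial_t v(t),\eta\rangle_{K_x}$, which makes sense precisely because $\eta$ is time-independent and lies in $H^1_0(K_x)$; it follows from standard Bochner calculus and is the central technical point of the proof. Then $|P'(t)|\leq\lVert\nabla_x\eta\rVert_{L^2(K_x)}\lVert\partial_t v(t)\rVert_{H^{-1}(K_x)}\lesssim h_x^{-d/2-1}\lVert\partial_t v(t)\rVert_{H^{-1}(K_x)}$, and absorbing the factor $|K_x|^{1/2}\sim h_x^{d/2}$ produces the announced scaling. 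Combining steps one and two via the triangle inequality proves the first statement.

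For the generalization with $k,\ell\in\mathbb{N}_0$, the same template applies with constants replaced by polynomials: in space, replace the uniform weight by a weight in $H^1_0(K_x)$ that reproduces $\mathbb{P}_\ell(K_x)$ under duality, so the spatial Cauchy--Schwarz step yields $h_x\lVert\nabla_x(v-v_x)\rVert_{L^2(K)}$ for any $v_x\in L^2(K_t;\mathbb{P}_\ell(K_x))$ after a Bramble--Hilbert argument; in time, replace the mean subtraction by the $L^2(K_t)$-orthogonal projection onto $\mathbb{P}_k(K_t)$ and invoke the polynomial Poincar\'e inequality $\lVert(I-\Pi^k_t)\varphi\rVert_{L^2(K_t)}\lesssim h_t\lVert(I-\Pi^{k-1}_t)\varphi'\rVert_{L^2(K_t)}$. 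The main obstacle throughout is the $H^{-1}$-in-space regularity of $\partial_t v$, which forces every test function paired with $\partial_t v$ to lie in $H^1_0(K_x)$; this is precisely why a compactly supported weight is needed and why the negative power $h_x^{-1}$ appears in the final bound.
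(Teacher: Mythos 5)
Your proof is correct, but it takes a genuinely different route from the paper's. The paper proves the result by composing two orthogonal projections --- the $H^{-1}(K_x)$-orthogonal projection onto $\mathbb{P}_0(K_x)$ (resp.\ $\mathbb{P}_\ell(K_x)$) in space and the $L^2(K_t)$-orthogonal projection onto $\mathbb{P}_0(K_t)$ (resp.\ $\mathbb{P}_k(K_t)$) in time --- and obtains all scaling factors from the polynomial inverse estimate $\lVert f_h\rVert_{L^2(K_x)}\lesssim h_x^{-1}\lVert f_h\rVert_{H^{-1}(K_x)}$ of Lemma~\ref{lem:inverseEst}: this gives $L^2$-stability of $\Pi_{H^{-1}(K_x)}$ for the spatial term, and converts the $H^{-1}$-valued time-approximation error (Lemmas~\ref{lem:avgTaylor}--\ref{lem:ApxInL2Hmin}) into the $L^2(K)$ bound carrying the factor $h_t/h_x$. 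You instead replace the spatial projection by pairing with an explicit compactly supported weight $\eta\in C_c^\infty(K_x)\subset H^1_0(K_x)$, so the factor $h_x^{-1}$ emerges from the elementary scaling $\lVert\nabla_x\eta\rVert_{L^2(K_x)}\,|K_x|^{1/2}\eqsim h_x^{-1}$, and the temporal part reduces to the scalar one-dimensional Poincar\'e inequality for $P(t)=\langle v(t),\eta\rangle_{K_x}$ together with the Bochner-calculus identity $P'=\langle\partial_t v,\eta\rangle_{K_x}$, which you correctly isolate as the crux (it is exactly where $\eta\in H^1_0(K_x)$ is indispensable, and it is the same structural reason the paper must work with $H^{-1}$-type projections). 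Both arguments share the top-level skeleton --- best constant approximation, then a spatial plus a temporal error --- but yours is more self-contained: it needs neither the inverse estimate nor the $H^{-1}$-orthogonal projection, at the price of dimension-dependent bookkeeping and, in the general polynomial case, of constructing a biorthogonal system of $H^1_0(K_x)$-weights (e.g.\ bubble-times-polynomial dual functions) reproducing $\mathbb{P}_\ell(K_x)$, which you only sketch; the paper's projection-based argument extends to arbitrary $k,\ell$ with no extra construction (though it, too, leaves the general case to ``similar arguments''). Two cosmetic points: in your identity for $\bar v(t)-P(t)$ the integration variable should be $y$ rather than $x$, and you should record explicitly that shape regularity enters through the existence of $\eta$ with the stated bounds (the inradius of $K_x$ is $\gtrsim h_x$) and through $|K_x|\eqsim h_x^d$.
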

The proof of the theorem splits the approximation of $v$ by a polynomial on $K$ into the approximation by a polynomial in time and a polynomial in space.
While approximation properties of the latter are well understood, we state approximation properties of functions in $\mathbb{P}_k(K_t;H^{-1}(K_x)) = \mathbb{P}_k(K_t) \otimes H^{-1}(K_x)$.
\begin{lemma}[Averaged Taylor polynomial in time]\label{lem:avgTaylor}
Let $\xi \in H^k(K_t;V)$ with $V \in \lbrace L^2(K_x),H^{-1}(K_x)\rbrace$ and $k \in \mathbb{N}_0$. There exists a polynomial $\xi_h \in \mathbb{P}_k(K_t;V)$ with
\begin{align*}
\lVert \partial_t^m (\xi - \xi_h)\rVert_{L^2(K_t;V)} \lesssim h_t^{k-m}\, \lVert  \partial_t^k \xi \rVert_{L^2(K_t;V)}\qquad\text{for all }m=0,\dots,k.
\end{align*}
\end{lemma}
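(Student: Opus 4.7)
The plan is to reduce to a reference interval and construct $\xi_h$ as a Bochner-valued averaged Taylor polynomial. The statement is trivial for $k=0$ upon choosing $\xi_h := 0$, so I focus on $k \geq 1$.

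First, I affinely map $K_t = [a, a+h_t]$ onto the reference interval $\hat K_t := [0,1]$ via $t = a + h_t \hat t$ and set $\hat \xi(\hat t) := \xi(a + h_t \hat t) \in H^k(\hat K_t; V)$. Fixing a kernel $\omega \in C^\infty_c(\hat K_t)$ with $\int_{\hat K_t} \omega = 1$, I define the averaged Taylor polynomial of degree $k-1$,
\begin{equation*}
(P \hat \xi)(\hat t) := \sum_{j=0}^{k-1} \int_{\hat K_t} \omega(s) \, \partial_{\hat t}^j \hat \xi(s) \, \frac{(\hat t - s)^j}{j!} \, ds \in \mathbb{P}_{k-1}(\hat K_t; V),
\end{equation*}
where the integrals are understood in the Bochner sense; they are well defined because $\partial_{\hat t}^j \hat \xi \in L^2(\hat K_t; V) \subset L^1(\hat K_t; V)$ for $0 \leq j \leq k$.

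Second, I would establish the reference estimate
\begin{equation*}
\lVert \partial_{\hat t}^m (\hat \xi - P \hat \xi)\rVert_{L^2(\hat K_t; V)} \lesssim \lVert \partial_{\hat t}^k \hat \xi\rVert_{L^2(\hat K_t; V)} \qquad \text{for } m = 0, \ldots, k.
\end{equation*}
For $m = k$ this is immediate with constant one since $P\hat \xi$ has degree $\leq k - 1$. For $m \leq k - 1$, the key input is the Bochner-valued Taylor remainder formula
\begin{equation*}
\hat \xi(\hat t) - \sum_{j=0}^{k-1} \frac{(\hat t - s)^j}{j!} \, \partial_{\hat t}^j \hat \xi(s) = \frac{1}{(k-1)!} \int_s^{\hat t} (\hat t - \tau)^{k-1} \, \partial_{\hat t}^k \hat \xi(\tau) \, d\tau,
\end{equation*}
valid for $V$-valued absolutely continuous functions by the Bochner version of the fundamental theorem of calculus. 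Integrating the remainder formula against $\omega(s)\,ds$, differentiating $m$ times in $\hat t$, taking the $V$-norm inside the Bochner integral, and bounding the kernel factor $|\omega^{(\ell)}(s)| (\hat t - \tau)^{k-1-\ell}$ uniformly on the unit interval yields the claim by Fubini and Cauchy--Schwarz.

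Finally, I transport the estimate back to $K_t$ by setting $\xi_h(t) := (P\hat \xi)((t-a)/h_t) \in \mathbb{P}_{k-1}(K_t; V) \subset \mathbb{P}_k(K_t; V)$ and applying the scaling identity $\lVert \partial_t^j \psi\rVert_{L^2(K_t; V)} = h_t^{-j + 1/2} \lVert \partial_{\hat t}^j \hat \psi\rVert_{L^2(\hat K_t; V)}$ to both sides of the reference estimate, which produces $h_t^{k-m}$ on the right-hand side as required. The main point that must be checked with some care is the Bochner-valued Taylor remainder formula on which the reference estimate rests; since $V \in \lbrace L^2(K_x), H^{-1}(K_x)\rbrace$ is a separable Hilbert space, this is either verified directly from the Bochner fundamental theorem of calculus applied to $\partial_{\hat t}^k \hat \xi \in L^2(\hat K_t; V)$, or, alternatively, by invoking the density of $H^k(\hat K_t) \otimes V$ in $H^k(\hat K_t; V)$ (cf.\ Remark~\ref{rem:TensorBochner}) to lift the purely scalar Bramble--Hilbert estimate to the Bochner setting.
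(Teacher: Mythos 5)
Your proof is correct, and your primary route genuinely differs from the paper's. The paper's own proof is essentially your closing alternative: it invokes the tensor-product structure of Remark~\ref{rem:TensorBochner} together with scalar polynomial approximation properties in $H^m(K_t)$, delegating all details to the appendix of the cited work of Diening, Storn, and Tscherpel; that route is shorter and, per that reference, covers general $L^p$-in-time spaces with $p\in[1,\infty]$. Your main argument instead constructs the Bochner-valued averaged Taylor polynomial explicitly on the reference interval, proves the remainder estimate there via the Bochner-valued Taylor formula (resting on the Bochner fundamental theorem of calculus, the same tool the paper uses in Lemma~\ref{lem:embedding}), and scales back; this buys a self-contained proof that never leaves the vector-valued setting and needs no Hilbert structure on $V$. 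Two minor points. First, kernel derivatives $\omega^{(\ell)}$ never need to enter: differentiating your $P\hat\xi$ in $\hat t$ shows that $\partial_{\hat t}^m P\hat\xi$ is exactly the averaged Taylor polynomial of order $k-m$ of $\partial_{\hat t}^m\hat\xi$, so the case $m\geq 1$ reduces to the case $m=0$ applied to $\partial_{\hat t}^m\hat\xi\in H^{k-m}(\hat K_t;V)$. Second, if you do take the density route, the estimate must be verified on finite sums of elementary tensors, not just on a single product $v_tv_x$, since norms do not split over sums; because $V$ is a Hilbert space this is repaired by orthonormalizing the spatial factors, after which both sides of the inequality decompose into squares --- a detail your direct construction avoids entirely.
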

\begin{proof}
This result follows directly from the tensor product structure in Remark~\ref{rem:TensorBochner} and approximation properties of polynomials in $H^m(K_t)$. A detailed proof (for general $L^p$ spaces with $p \in [1,\infty]$) can be found in the appendix of \cite{DieningStornTscherpel22}.
\end{proof}
Let $\mathcal{I}^{L^2}_t \colon L^2(K_t) \to \mathbb{P}_k(K_t)$ be an $L^2(K_t)$ stable projection onto the space of polynomials of maximal degree $k\in \mathbb{N}_0$ in time. Its application everywhere in space  leads to a mapping for functions on the entire time-space cell $K$, that is,
\begin{align*}
\mathcal{I}^{L^2}_t\colon L^2(K_t;H^{-1}(K_x)) \to \mathbb{P}_k(K_t;H^{-1}(K_x)).
\end{align*}
\begin{lemma}[Approximability in $L^2(K_t;H^{-1}(K_x))$]\label{lem:ApxInL2Hmin}
The mapping $\mathcal{I}^{L^2}_t\colon L^2(K) \to \mathbb{P}_k(K_t;H^{-1}(K_x))$  satisfies for all $v \in H^m(K_t;H^{-1}(K_x))$ and $m=0,\dots,k$
\begin{align*}
\lVert \partial_t^m (v - \mathcal{I}^{L^2}_t v) \rVert_{L^2(K_t;H^{-1}(K_x))} \eqsim \min_{v_t \in \mathbb{P}_k(K_t;H^{-1}(K_x))} \lVert \partial_t^m (v - v_t) \rVert_{L^2(K_t;H^{-1}(K_x))}.
\end{align*}
\end{lemma}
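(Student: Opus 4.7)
The equivalence splits into a trivial and a non-trivial direction. The $\gtrsim$ direction is immediate: since $\mathcal{I}^{L^2}_t v \in \mathbb{P}_k(K_t;H^{-1}(K_x))$, it is admissible in the minimum on the right, so the minimum is bounded above by the left-hand side.

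For the $\lesssim$ direction, my plan is to exploit the projection identity $\mathcal{I}^{L^2}_t v_t = v_t$ for every $v_t\in\mathbb{P}_k(K_t;H^{-1}(K_x))$. Writing
\begin{align*}
\partial_t^m (v-\mathcal{I}^{L^2}_t v) = \partial_t^m (v-v_t) - \partial_t^m \mathcal{I}^{L^2}_t(v-v_t)
\end{align*}
and applying the triangle inequality leaves two terms. The first is already in the desired form. For the second, I would use that $\mathcal{I}^{L^2}_t(v-v_t)$ is a time-polynomial of degree at most $k$ with values in $H^{-1}(K_x)$; a scaling to a reference interval combined with finite-dimensional norm equivalence on $\mathbb{P}_k$ (extended from the scalar case by tensoring with the Banach space $H^{-1}(K_x)$) provides the Bernstein-type inverse estimate
\begin{align*}
\lVert\partial_t^m \mathcal{I}^{L^2}_t(v-v_t)\rVert_{L^2(K_t;H^{-1}(K_x))} \lesssim h_t^{-m}\lVert \mathcal{I}^{L^2}_t(v-v_t)\rVert_{L^2(K_t;H^{-1}(K_x))}.
\end{align*}
The $L^2(K_t)$-stability of $\mathcal{I}^{L^2}_t$, applied pointwise in the $H^{-1}(K_x)$ variable, then bounds the right-hand side by $h_t^{-m}\lVert v-v_t\rVert_{L^2(K_t;H^{-1}(K_x))}$.

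It remains to choose $v_t$ cleverly so that the two pieces can be reassembled into the infimum on the right. Let $v_t^\star\in\mathbb{P}_k(K_t;H^{-1}(K_x))$ realize that infimum and set $v_t := v_t^\star + q$ with $q\in\mathbb{P}_{m-1}(K_t;H^{-1}(K_x))\subset\mathbb{P}_k(K_t;H^{-1}(K_x))$ to be determined. Since $\partial_t^m q=0$, the term $\lVert\partial_t^m(v-v_t)\rVert_{L^2(K_t;H^{-1}(K_x))}$ still equals the infimum. I would then choose $q$ via a Bochner-valued Poincaré inequality
\begin{align*}
\inf_{q\in\mathbb{P}_{m-1}(K_t;H^{-1}(K_x))}\lVert w-q\rVert_{L^2(K_t;H^{-1}(K_x))}\lesssim h_t^m\lVert \partial_t^m w\rVert_{L^2(K_t;H^{-1}(K_x))}
\end{align*}
applied to $w=v-v_t^\star$, so that $h_t^{-m}\lVert v-v_t\rVert_{L^2(K_t;H^{-1}(K_x))}$ is also dominated by the infimum. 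Combining the two bounds yields the claim.

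The main obstacle I expect is verifying the two reference-interval building blocks in the $V=H^{-1}(K_x)$-valued setting: the inverse estimate on $\mathbb{P}_k(\widehat{K_t};V)$ and the Poincaré inequality on $H^m(\widehat{K_t};V)$ modulo $\mathbb{P}_{m-1}(\widehat{K_t};V)$. Both are standard for scalar-valued functions and transfer to arbitrary Banach spaces $V$ because time differentiation and the polynomial projection act only on the time variable; this can be done either via duality/Hahn--Banach, or directly via the tensor-product description of Bochner spaces of Remark~\ref{rem:TensorBochner} and the averaged Taylor polynomial construction underlying Lemma~\ref{lem:avgTaylor}.
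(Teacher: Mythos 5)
Your proposal is correct and takes essentially the same route as the paper, whose proof simply invokes ``classical arguments using Lemma~\ref{lem:avgTaylor}'' (detailed in \cite[Thm.~24]{DieningStornTscherpel22}): the projection property, triangle inequality, inverse estimate, $L^2(K_t)$-stability, and a Bochner-valued Bramble--Hilbert bound from the averaged Taylor polynomial are exactly your decomposition. One small caveat: the transfer of the scalar building blocks to the vector-valued setting is justified here not merely because the operators act only in the time variable (that reasoning fails for general bounded operators and general Banach spaces), but because $H^{-1}(K_x)$ is a Hilbert space and the operators involved are finite-rank with polynomial kernels, so their tensorized extensions to $L^2(K_t;H^{-1}(K_x))$ are bounded.
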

\begin{proof}
This result follows by classical arguments using Lemma~\ref{lem:avgTaylor}. See \cite[Thm.~24]{DieningStornTscherpel22} for a detailed proof.
\end{proof}
With these two results we are able to verify Theorem~\ref{thm:paraPoincare}.
\begin{proof}[Proof of Theorem~\ref{thm:paraPoincare}]
We denote the $L^2(K_t)$ orthogonal projection in time and the $H^{-1}(K_x)$ orthogonal projection in space onto constant functions by
\begin{align*}
\Pi_{L^2(K_t)}\colon L^2(K_t) \to \mathbb{P}_0(K_t)\qquad\text{and}\qquad \Pi_{H^{-1}(K_x)}\colon H^{-1}(K_x)\to \mathbb{P}_0(K_x). 
\end{align*}
By applying them everywhere in time or space they extend to semi-discrete maps
\begin{align*}
\Pi_{L^2(K_t)}&\colon L^2(K_t;L^2(K_x)) \to \mathbb{P}_0(K_t;L^2(K_x)),\\
\Pi_{H^{-1}(K_x)}&\colon L^2(K_t;H^{-1}(K_x))\to L^2(K_t;\mathbb{P}_0(K_x)). 
\end{align*}
Since their composition maps onto constant functions, we have for all $f\in L^2(K)$
\begin{align}\label{eq:ProofTemp1}
\begin{aligned}
&\lVert f - \langle f \rangle_K\rVert_{L^2(K)}  \leq \lVert f - \Pi_{H^{-1}(K_x)}\Pi_{L^2(K_t)} f \rVert_{L^2(K)}\\
&\qquad \leq  \lVert f - \Pi_{H^{-1}(K_x)} f \rVert_{L^2(K)}  + \lVert  \Pi_{H^{-1}(K_x)}  (f - \Pi_{L^2(K_t)} f )\rVert_{L^2(K)}.
\end{aligned}
\end{align}
Set $\langle f \rangle_{K_x} \coloneqq \dashint_{K_x}f\dx \in L^2(K_t;\mathbb{P}_0(K_x))$. The first addend is bounded by
\begin{align*}
\lVert f - \Pi_{H^{-1}(K_x)} f \rVert_{L^2(K)} \leq \lVert f - \langle f\rangle_{K_x} \rVert_{L^2(K)} + \lVert \Pi_{H^{-1}(K_x)} (f - \langle f\rangle_{K_x}) \rVert_{L^2(K)}.
\end{align*}
The inverse estimate in Lemma~\ref{lem:inverseEst} yields $L^2$ stability of $\Pi_{H^{-1}(K_x)}$ in the sense that
\begin{align*}
\lVert \Pi_{H^{-1}(K_x)} g \rVert_{L^2(K)} &\lesssim h_x^{-1} \lVert \Pi_{H^{-1}(K_x)} g \rVert_{L^2(K_t;H^{-1}(K_x))} \\
& \leq  h_x^{-1} \lVert g \rVert_{L^2(K_t;H^{-1}(K_x))} \lesssim \lVert  g \rVert_{L^2(K)}\qquad\text{for all }g\in L^2(K).
\end{align*}
These two estimates (with $g = f - \langle f\rangle_{K_x}$) and \Poincare's inequality show
\begin{align}\label{eq:Profasadasd}
\lVert f - \Pi_{H^{-1}(K_x)} f \rVert_{L^2(K)} \lesssim h_x\, \lVert \nabla_x f \rVert_{L^2(K)}.
\end{align}
The second addend in \eqref{eq:ProofTemp1} is bounded due to the inverse estimate in Lemma~\ref{lem:inverseEst}, stability of $\Pi_{H^{-1}(K_x)}$ in $H^{-1}(K_x)$, and approximation properties in Lemma~\ref{lem:avgTaylor}--\ref{lem:ApxInL2Hmin} by
\begin{align}\label{eq:Profasadasd2}
\begin{aligned}
 \lVert  \Pi_{H^{-1}(K_x)}  (f - \Pi_{L^2(K_t)} f )\rVert_{L^2(K)}& \lesssim  h_x^{-1} \lVert f - \Pi_{L^2(K_t)} f \rVert_{L^2(K_t;H^{-1}(K_x))} \\
 &\lesssim h_t h^{-1}_x \lVert \partial_t f\rVert_{L^2(K_t;H^{-1}(K_x))}. 
\end{aligned}
\end{align}
Combining \eqref{eq:ProofTemp1}--\eqref{eq:Profasadasd2} concludes the proof of the first inequality in the theorem. Similar arguments yield the second inequality.
\end{proof}
If the function $v$ in Theorem~\ref{thm:paraPoincare} satisfies additionally that $\partial_t v \in L^2(K)$, an application of \eqref{eq:FriedrichsConsequance} to the first estimate leads to the \Poincare inequality 
\begin{align}\label{eq:PoincareClassic}
\lVert v - \langle v \rangle_K\rVert_{L^2(K)} \lesssim h_x\, \lVert \nabla_x v \rVert_{L^2(K)} + h_t\, \lVert \partial_t v \rVert_{L^2(K)}. 
\end{align}
In this regard Theorem~\ref{thm:paraPoincare} can be seen as a weaker version of \Poincare's inequality that is better suited for parabolic problems.
For example, the regularity stated in \eqref{eq:Reg1} does not yield $\partial_t \nabla_x u\in L^2(K)$ for the solution to the heat equation, preventing an application of \eqref{eq:PoincareClassic}. However, Theorem~\ref{thm:paraPoincare} applies and yields with parabolic scaling $h_t \eqsim h_x^2$ the convergence result
\begin{align*}
\lVert \nabla_x u - \langle \nabla_x u\rangle_K\rVert_{L^2(K)}&\lesssim h_x\, \lVert \nabla_x^2 u \rVert_{L^2(K)} + \frac{h_t}{h_x}\, \lVert \partial_t \nabla_x u \rVert_{L^2(K_t;H^{-1}(K_x))} \\
&\lesssim h_x\, (\lVert \nabla_x^2 u \rVert_{L^2(K)} +  \lVert \partial_t u \rVert_{L^2(K)}).  
\end{align*}
The need of parabolic scaling for irregular solutions is further illustrated by the numerical experiment in \cite[Sec.~7.4]{DieningStorn21}.
\begin{remark}[Sharp estimate]
Inverse estimates show that the bound in Theorem~\ref{thm:paraPoincare} must be sharp. More precisely, let $v = v_t v_x$ with polynomials $v_t\in \mathbb{P}_k(K_t)$ and $v_x\in \mathbb{P}_\ell(K_x)$ with $\langle v\rangle_K = 0$ for $K=K_t\times K_x$. Then inverse estimates reveal
\begin{align*}
h_x\, \lVert \nabla_x v \rVert_{L^2(K)} + \frac{h_t}{h_x} \lVert \partial_t v \rVert_{L^2(K_t;H^{-1}(K_x))} \lesssim \lVert v\rVert_{L^2(K)} + h_t\, \lVert \partial_t v\rVert_{L^2(K)} \lesssim \lVert v \rVert_{L^2(K)}.  
\end{align*}

\end{remark}
\section{Interpolation in space or time}\label{sec:interpolInTimeAndSpace}
The main idea in this paper's design of interpolation operators in space-time is to exploit the tensor product structure of Bochner spaces like $H^1(\mathcal{J};H^{-1}(\Omega)) = H^1(\mathcal{J}) \otimes H^{-1}(\Omega) \supset X$. This allows us to apply an interpolation operator in time to the $H^1(\mathcal{J})$ component and in space to the $H^{-1}(\Omega)$ component. 
\subsection{Interpolation operator in space}\label{sec:interHmin}
We utilize the $H^{-1}(\Omega)$ stable interpolation operator $\mathcal{I}_x\colon H^{-1}(\Omega) \to \mathcal{L}^1_{\ell,0}(\tria_x)$ introduced in \cite{DieningStornTscherpel22} for conforming and shape-regular partitions $\tria_x$ of $\Omega$ with $\ell \in \mathbb{N}$. Throughout this subsection we assume that $\tria_x$ is such a partition. 
Let $\nodes_x$ denote the set of vertices in $\tria_x$ and set for all $j\in \nodes_x$ the corresponding vertex patch
\begin{align*}
\omega_{x,j} \coloneqq \bigcup \lbrace K_x\in \tria_x\colon j\in K_x\rbrace.
\end{align*}
We denote the nodal basis functions by $\phi_{x,j} \in \mathcal{L}^1_1(\tria_x)$ with $\phi_{x,j}(i) = \delta_{i,j}$ for all vertices $i,j\in \nodes_x$. 
\begin{lemma}[Localization of $H^{-1}(\Omega)$]\label{lem:LocHminOne}
Let $\xi \in H^{-1}(\Omega)$. Then we have 
\begin{align*}
\sum_{j\in \nodes_x} \lVert \xi \rVert_{H^{-1}(\omega_{x,j})}^2 \lesssim \lVert \xi \rVert_{H^{-1}(\Omega)}^2 \lesssim \sum_{j\in \nodes_x} h_{x,j}^{-2} \lVert \xi \rVert_{H^{-1}(\omega_{x,j})}^2.
\end{align*}
\end{lemma}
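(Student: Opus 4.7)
The plan is to establish both inequalities by duality with respect to test functions in $H^1_0$-spaces, exploiting two complementary features of the vertex patches: their bounded overlap (for the lower bound) and the fact that the nodal basis $\lbrace \phi_{x,j}\rbrace_{j\in\nodes_x}$ is a partition of unity subordinate to them (for the upper bound). The factor $h_{x,j}^{-2}$ on the right-hand side should arise from the gradient bound $\lVert \nabla_x\phi_{x,j}\rVert_{L^\infty(\omega_{x,j})}\lesssim h_{x,j}^{-1}$.

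For the lower bound $\sum_{j\in\nodes_x}\lVert \xi\rVert^2_{H^{-1}(\omega_{x,j})}\lesssim \lVert \xi\rVert^2_{H^{-1}(\Omega)}$ I would fix $\varepsilon>0$ and, for each $j\in\nodes_x$, select a near-maximizer $w_j\in H^1_0(\omega_{x,j})$ with $\lVert \nabla_x w_j\rVert_{L^2(\omega_{x,j})}=1$ and $\langle \xi,w_j\rangle_{\omega_{x,j}}\geq(1-\varepsilon)\lVert \xi\rVert_{H^{-1}(\omega_{x,j})}$. Extending each $w_j$ by zero to $\Omega$ and setting $\alpha_j:=\lVert \xi\rVert_{H^{-1}(\omega_{x,j})}$, I would test $\xi$ against the global superposition $w:=\sum_{j\in\nodes_x}\alpha_j w_j\in H^1_0(\Omega)$. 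Because every patch intersects only finitely many others (shape regularity), a Cauchy--Schwarz estimate applied to the finite-banded adjacency pattern of patches yields $\lVert \nabla_x w\rVert^2_{L^2(\Omega)}\lesssim \sum_j\alpha_j^2$, while by construction $\langle \xi,w\rangle_\Omega\geq(1-\varepsilon)\sum_j\alpha_j^2$. The definition of $\lVert \xi\rVert_{H^{-1}(\Omega)}$ and $\varepsilon\to 0$ then conclude.

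For the upper bound $\lVert \xi\rVert^2_{H^{-1}(\Omega)}\lesssim \sum_{j\in\nodes_x}h_{x,j}^{-2}\lVert \xi\rVert^2_{H^{-1}(\omega_{x,j})}$, the plan is to fix $w\in H^1_0(\Omega)$ and decompose $w=\sum_{j\in\nodes_x}w\phi_{x,j}$ via the partition of unity. Each factor $w\phi_{x,j}$ lies in $H^1_0(\omega_{x,j})$: on faces of $\partial\omega_{x,j}$ interior to $\Omega$ the hat function $\phi_{x,j}$ vanishes, since those faces cannot contain the vertex $j$, while on $\partial\omega_{x,j}\cap\partial\Omega$ the function $w$ vanishes. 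The local dual pairing combined with the product rule $\nabla_x(w\phi_{x,j})=\phi_{x,j}\nabla_x w+w\nabla_x\phi_{x,j}$ yields
\begin{align*}
\langle\xi,w\rangle_\Omega=\sum_{j\in\nodes_x}\langle\xi,w\phi_{x,j}\rangle_{\omega_{x,j}}\lesssim \sum_{j\in\nodes_x}\lVert \xi\rVert_{H^{-1}(\omega_{x,j})}\bigl(\lVert \nabla_x w\rVert_{L^2(\omega_{x,j})}+h_{x,j}^{-1}\lVert w\rVert_{L^2(\omega_{x,j})}\bigr).
\end{align*}
A weighted Cauchy--Schwarz inequality in which each summand $\lVert \xi\rVert_{H^{-1}(\omega_{x,j})}$ is paired with a factor of $h_{x,j}^{-1}$ then isolates $h_{x,j}^{-2}\lVert \xi\rVert^2_{H^{-1}(\omega_{x,j})}$ on one side, and finite overlap of the patches together with Friedrichs' inequality on $\Omega$ bound the remaining sum by a constant multiple of $\lVert \nabla_x w\rVert^2_{L^2(\Omega)}$.

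The main obstacle is the upper bound. The delicate point is that only the commutator $\nabla_x\phi_{x,j}$ carries the $h_{x,j}^{-1}$ weight, so the Cauchy--Schwarz exponents have to be distributed in such a way that the competing term $\lVert \nabla_x w\rVert_{L^2(\omega_{x,j})}$ does not spoil the desired scaling; moreover, absorbing the ensuing $\lVert w\rVert_{L^2(\Omega)}$ back into $\lVert \nabla_x w\rVert_{L^2(\Omega)}$ requires the global Friedrichs estimate, so the implicit constant must be allowed to depend on $\Omega$.
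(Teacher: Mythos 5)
Your proposal is correct and follows essentially the same route as the paper: the upper bound is proved there by exactly your partition-of-unity decomposition $\langle \xi,w\rangle_\Omega=\sum_{j\in\nodes_x}\langle\xi,\phi_{x,j}w\rangle_\Omega$, the product-rule estimate with $\lVert\nabla_x\phi_{x,j}\rVert_{L^\infty}\lesssim h_{x,j}^{-1}$, a weighted Cauchy--Schwarz pairing $h_{x,j}^{-1}\lVert\xi\rVert_{H^{-1}(\omega_{x,j})}$ against $h_{x,j}\lVert\nabla_x(\phi_{x,j}w)\rVert_{L^2(\omega_{x,j})}$, and finite overlap plus Friedrichs' inequality. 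For the lower bound the paper merely cites standard arguments, and your near-maximizer/zero-extension/finite-overlap construction is precisely that standard argument, correctly executed.
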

\begin{proof}
Let $\xi \in H^{-1}(\Omega)$. The partition of unity $1 = \sum_{j\in \nodes_x} \phi_{x,j}$ leads for all $w\in H^1_0(\Omega)$ to the upper bound
\begin{align}\label{eq:proofTemp3}
\begin{aligned}
\langle \xi ,w\rangle_\Omega &= \sum_{j\in \nodes_x} \langle \xi, \phi_{x,j} w\rangle_\Omega\\
& \leq \sum_{j\in \nodes_x} h_{x,j}^{-1}\lVert \xi \rVert_{H^{-1}(\omega_{x,j})} h_{x,j}\lVert \nabla_x( \phi_{x,j} w)\rVert_{L^2(\omega_{x,j})} \\
& \lesssim \Big(\sum_{j\in \nodes_x} h_{x,j}^{-2} \lVert \xi \rVert_{H^{-1}(\omega_{x,j})}^2\Big)^{1/2} \bigg(\Big(\sum_{j\in \nodes_x} h_{x,j}^2 \lVert \nabla_x w \rVert_{L^2(\omega_{x,j})}^2\Big)^{1/2}\\
&\qquad\qquad\qquad\qquad\qquad\qquad\qquad\qquad + \Big(\sum_{j\in \nodes_x} \lVert  w \rVert_{L^2(\omega_{x,j})}^2\Big)^{1/2}\bigg) \\
&\lesssim \Big(\sum_{j\in \nodes_x} h_{x,j}^{-2} \lVert \xi \rVert_{H^{-1}(\omega_{x,j})}^2\Big)^{1/2} \lVert \nabla_x w \rVert_{L^2(\Omega)}.
\end{aligned}
\end{align}
This concludes the proof of the upper bound. The lower bound follows with standard arguments (see for example \cite[Lem.~11]{DieningStornTscherpel22}).
\end{proof}
The upper bound in Lemma~\ref{lem:LocHminOne} is indeed sharp, as one can see by localizing the $H^{-1}(\Omega)$ norm of the constant function $\xi = 1 \in H^{-1}(\Omega)$. 
The operator $\mathcal{I}_x$ allows for a localization of the $H^{-1}(\Omega)$ norm without any additional weights. In particular, we have the following result involving the patches
\begin{align*}
\begin{aligned}
\omega_{x,j}^2 &\coloneqq \bigcup \Big\lbrace \omega_{x,i}\colon i \in \omega_{x,j} \rbrace&&\text{for all }j\in \nodes_x,\\
\omega_{K_x} &\coloneqq \bigcup \Big\lbrace K_x'\in \tria_x \colon K_x \cap K_x'\neq \emptyset\rbrace&&\text{for all }K_x \in \tria_x.
\end{aligned}
\end{align*}
\begin{theorem}[Interpolation operator $\mathcal{I}_x$]\label{thm:Ix}
The operator $\mathcal{I}_x \colon H^{-1}(\Omega) \to \mathcal{L}^1_{\ell,0}(\tria_x)$ is a linear projection onto $\mathcal{L}^1_{\ell,0}(\tria_x)$. It satisfies for all $\xi \in H^{-1}(\Omega)$ 
\begin{align*}
\lVert \xi - \mathcal{I}_x \xi \rVert^2_{H^{-1}(\Omega)}& \eqsim  \sum_{j\in \nodes_x} \lVert \xi - \mathcal{I}_x \xi \rVert_{H^{-1}(\omega_{x,j})}^2\eqsim\sum_{j\in \nodes_x} \min_{\xi_h\in \mathcal{L}^1_{\ell,0}(\tria_x)} \lVert \xi - \xi_h\rVert_{H^{-1}(\omega_{x,j}^2)}^2.
\end{align*}
Moreover, it satisfies for all $\xi \in L^2(\Omega)$  and $K\in \tria$
\begin{align*}
\lVert \xi - \mathcal{I}_x \xi\rVert_{L^2(K)} \eqsim \min_{\xi_h \in \mathcal{L}^1_{\ell,0}(\tria_x)} \lVert \xi - \xi_h\rVert_{L^2(\omega_{K_x})}.
\end{align*}
\end{theorem}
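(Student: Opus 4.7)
The fact that $\mathcal{I}_x$ is a linear projection with range $\mathcal{L}^1_{\ell,0}(\tria_x)$ is part of its construction in \cite{DieningStornTscherpel22}; I invoke it directly. Writing $\eta \coloneqq \xi - \mathcal{I}_x \xi$, the plan is to establish the two chains of equivalences separately, relying in each case on the local $H^{-1}$- (respectively $L^2$-) stability of $\mathcal{I}_x$ from a first-ring patch $\omega_{x,j}^2$ to the vertex patch $\omega_{x,j}$, which is also known from \cite{DieningStornTscherpel22}.

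For the first chain, the bound $\sum_{j \in \nodes_x} \lVert \eta \rVert_{H^{-1}(\omega_{x,j})}^2 \lesssim \lVert \eta \rVert_{H^{-1}(\Omega)}^2$ is a direct instance of the lower bound in Lemma~\ref{lem:LocHminOne}. The reverse inequality is the main obstacle, since the matching upper bound in Lemma~\ref{lem:LocHminOne} carries a spurious weight $h_{x,j}^{-2}$ that must be eliminated using the defining property of $\mathcal{I}_x$. Concretely, for $w \in H^1_0(\Omega)$ I would expand $\langle \eta, w \rangle_\Omega = \sum_{j} \langle \eta, \phi_{x,j} w \rangle_\Omega$ via the partition of unity and then use the vanishing moment property of $\eta$ built into $\mathcal{I}_x$ (inherited from its dual Scott--Zhang nature) to subtract a local constant $c_j$ from $w$ on each patch $\omega_{x,j}$ without altering the pairing. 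A local Poincar\'e inequality on $\omega_{x,j}$ then gives $\lVert \nabla_x(\phi_{x,j}(w-c_j)) \rVert_{L^2(\omega_{x,j})} \lesssim \lVert \nabla_x w \rVert_{L^2(\omega_{x,j}^2)}$, so the factor $h_{x,j}$ from the definition of $\lVert \eta \rVert_{H^{-1}(\omega_{x,j})}$ cancels the weight. Cauchy--Schwarz and finite overlap of patches conclude the bound.

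For the equivalence with the best-approximation sum, the direction $\sum_j \lVert \eta \rVert_{H^{-1}(\omega_{x,j})}^2 \lesssim \sum_j \min_{\xi_h} \lVert \xi - \xi_h \rVert_{H^{-1}(\omega_{x,j}^2)}^2$ is the standard quasi-optimality argument: for arbitrary $\xi_h \in \mathcal{L}^1_{\ell,0}(\tria_x)$ one writes $\eta = (\xi - \xi_h) - \mathcal{I}_x(\xi - \xi_h)$ and applies the local $H^{-1}$-stability of $\mathcal{I}_x$ from $\omega_{x,j}^2$ to $\omega_{x,j}$. Taking the infimum over $\xi_h$ and summing over $j$ yields the estimate. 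The reverse inequality is proved by testing the minimum with the competitor $\xi_h = \mathcal{I}_x \xi$, decomposing $\omega_{x,j}^2$ into the finite union of vertex patches $\omega_{x,i}$ with $i \in \omega_{x,j}$, and applying the weight-free localization just established; finite overlap of the patches when summing over $j$ gives the desired bound.

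The $L^2$ estimate follows the same template but is simpler, since no negative Sobolev norms are involved. The bound $\min_{\xi_h} \lVert \xi - \xi_h \rVert_{L^2(\omega_{K_x})} \leq \lVert \xi - \mathcal{I}_x \xi \rVert_{L^2(\omega_{K_x})}$ is trivial because $\mathcal{I}_x \xi$ is an admissible competitor and $K \subset \omega_{K_x}$; the converse uses again $\xi - \mathcal{I}_x \xi = (I - \mathcal{I}_x)(\xi - \xi_h)$ together with the local $L^2$-stability of $\mathcal{I}_x$ on $\omega_{K_x}$, followed by taking the infimum over $\xi_h$. The truly non-routine step of the whole proof is the weight removal in the $H^{-1}$ localization, which genuinely depends on the dual orthogonality structure of $\mathcal{I}_x$ supplied by \cite{DieningStornTscherpel22}; everything else is standard quasi-interpolation bookkeeping.
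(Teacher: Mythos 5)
The paper does not actually prove this theorem: its entire proof is the citation ``This result is shown in \cite[Thm.~1]{DieningStornTscherpel22}'', the statement being that theorem essentially verbatim. Your proposal instead reconstructs the argument from structural properties of $\mathcal{I}_x$ (projection property, local stability from $\omega_{x,j}^2$ to $\omega_{x,j}$, vanishing moments of the error against the hat functions), which you in turn take from the same reference --- so in spirit both ``proofs'' rest on \cite{DieningStornTscherpel22}, but yours rebuilds what is under the hood. For the $H^{-1}$ chain your sketch is sound and mirrors the strategy of the cited work: the lower bound is the unweighted half of Lemma~\ref{lem:LocHminOne}; the weight removal via $\langle \xi - \mathcal{I}_x\xi,\phi_{x,j}\rangle_\Omega = 0$ plus a local \Poincare{} inequality is exactly the mechanism that makes this operator special (at boundary patches one uses Friedrichs with $c_j=0$, so no moment condition is needed there); and quasi-optimality follows from the projection property plus local stability. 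One step you gloss over: in the final direction you invoke ``the weight-free localization just established'' on the subdomain $\omega_{x,j}^2$, but what you established was the localization over $\Omega$. The argument does transfer, since $w\in H^1_0(\omega_{x,j}^2)$ extends by zero to $H^1_0(\Omega)$ and the partition-of-unity/moment argument is local, but the index set then consists of all vertices of $\omega_{x,j}^2$, whose patches stick out of $\omega_{x,j}^2$; the sum over $j$ is then controlled by finite overlap. This deserves a sentence, not a parenthesis.

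The genuine gap is in the $L^2$ part. The stated equivalence has the single cell $K$ on the left, so the nontrivial direction is $\min_{\xi_h\in\mathcal{L}^1_{\ell,0}(\tria_x)}\lVert\xi-\xi_h\rVert_{L^2(\omega_{K_x})}\lesssim\lVert\xi-\mathcal{I}_x\xi\rVert_{L^2(K)}$. Your ``trivial'' argument only yields $\min_{\xi_h}\lVert\xi-\xi_h\rVert_{L^2(\omega_{K_x})}\leq\lVert\xi-\mathcal{I}_x\xi\rVert_{L^2(\omega_{K_x})}$, and the inclusion $K\subset\omega_{K_x}$ makes $\lVert\cdot\rVert_{L^2(K)}$ \emph{smaller} than $\lVert\cdot\rVert_{L^2(\omega_{K_x})}$, so it cannot be used to shrink the right-hand side from the patch to the cell; as written the step proves the wrong inequality. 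In fact the literal cellwise $\eqsim$ is delicate: perturbing a discrete function by a contribution supported on an element of $\omega_{K_x}\setminus K_x$ that is annihilated by $\mathcal{I}_x$ gives zero error on $K$ but positive best-approximation error on $\omega_{K_x}$. What is robustly true --- and what your quasi-optimality argument does prove --- is the one-sided cellwise bound $\lVert\xi-\mathcal{I}_x\xi\rVert_{L^2(K)}\lesssim\min_{\xi_h}\lVert\xi-\xi_h\rVert_{L^2(\omega_{K_x})}$, with the reverse direction holding after summation over all cells by finite overlap of the patches (which is how the paper actually uses the result in the proof of Theorem~\ref{thm:IX}). So either prove only the one-sided cellwise bound and the summed equivalence, or flag the cellwise $\eqsim$ as an overstatement inherited from the paper; calling it trivial papers over the one place where your write-up and the stated theorem genuinely diverge.
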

\begin{proof}
This result is shown in \cite[Thm.~1]{DieningStornTscherpel22}.
\end{proof}
\begin{remark}[Boundary data]
It is possible to modify the design of $\mathcal{I}_x$ in order to replace the space $\mathcal{L}^1_{\ell,0}(\tria_x)$ equipped with zero boundary data by the space $\mathcal{L}^1_{\ell}(\tria_x)$ without zero boundary data; see \cite{DieningStornTscherpel22} for details. 
\end{remark}
An application of $\mathcal{I}_x$ everywhere in time extends the operator to a mapping $\mathcal{I}_x \colon L^2(\mathcal{J};H^{-1}(\Omega)) \to L^2(\mathcal{J};\mathcal{L}^1_{\ell,0}(\tria_x))$ in the sense that for all $v\in L^2(\mathcal{J};H^{-1}(\Omega))$ 
\begin{align}\label{eq:defIxJ}
(\mathcal{I}_x v)(s) = \mathcal{I}_x v(s)\qquad\text{for almost all }s \in \mathcal{J}.
\end{align}
\subsection{Interpolation operators in time}\label{sec:It}
Besides the interpolation operator $\mathcal{I}_x$ in space introduced in the previous subsection, we utilize an interpolation operator $\mathcal{I}_t\colon H^1(\mathcal{J}) \to \mathcal{L}^1_k(\tria_t)$ with polynomial degree $k\in \mathbb{N}$ and partition $\tria_t$ of the time interval $\mathcal{J}$. We set the operator locally for each for time interval $K_t = [a,b] \in \tria_t$. Its definition involves the bubble function $b_{K_t} \in \mathbb{P}_2(K_t)$ with $\int_{K_t} b_{K_t} \ds = 1$ and $b_{K_t}(a) = 0 = b_{K_t}(b)$.
For $v\in H^1(K_t)$ we set the operator as follows. Let $\mathcal{I}_{K_t}^1 v\in \mathbb{P}_1(K_t)$ denote the nodal interpolation defined by
\begin{align*}
\mathcal{I}_{K_t}^1 v(a)= v(a)\qquad\text{and}\qquad\mathcal{I}_{K_t}^1 v(b) = v(b).
\end{align*} 
Let $\mathcal{I}_{K_t}^2v = 0$ for $k=1$ and for $k\geq 2$ let $\mathcal{I}_{K_t}^2 v \in \mathbb{P}_{k-2}(K_t)$ be the solution to
\begin{align*}
\int_{K_t} b_{K_t} (\mathcal{I}_{K_t}^2 v)\,  w_{k-2} \ds = \int_{K_t} (v-\mathcal{I}_{K_t}^1 v)\,  w_{k-2} \ds \qquad\text{for all }w_{k-2}\in \mathbb{P}_{k-2}(K_t).
\end{align*}
We set the interpolation of $v$ as
\begin{align}\label{eq:DefIt}
\mathcal{I}_{K_t} v \coloneqq \mathcal{I}_{K_t}^1 v + b_{K_t} \mathcal{I}_{K_t}^2 v.
\end{align}
Moreover, we denote the $L^2(K_t)$ orthogonal projection onto $\mathbb{P}_{r}(K_t)$ by
\begin{align}\label{eq:DefPit}
\Pi_{\mathbb{P}_r(K_t)}\colon L^2(K_t) \to \mathbb{P}_r(K_t)\qquad\text{for all }r\in \mathbb{N}_0.
\end{align}
\begin{theorem}[Interpolation operator $\mathcal{I}_{K_t}$]\label{thm:It}
The operator $\mathcal{I}_t\colon H^{1}(K_t) \to \mathbb{P}_k(K_t)$ is a linear projection onto $\mathbb{P}_k(K_t)$ satisfying the commuting diagram property
\begin{align}\label{eq:CommutingIt}
\partial_t \mathcal{I}_{K_t} = \Pi_{\mathbb{P}_{k-1}(K_t)} \partial_t.
\end{align}
For all $v\in H^1(K_t)$ the difference $v- \mathcal{I}_{K_t} v\in H^1_0(K_t)$ has zero boundary values and
\begin{align*}
\lVert \partial_t (v - \mathcal{I}_{K_t} v) \rVert_{L^2(K_t)} & = \min_{v_h \in \mathbb{P}_k(K_t)} \lVert \partial_t (v - v_h) \rVert_{L^2(K_t)}. 
\end{align*}
\end{theorem}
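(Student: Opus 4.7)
The plan is to verify the four assertions in order, using the fact that $\mathcal{I}_{K_t} v$ is constructed so that its linear part matches $v$ at the endpoints while its bubble correction is tuned to make $v - \mathcal{I}_{K_t} v$ orthogonal in a weighted sense to all polynomials of degree $k-2$. Linearity is immediate because both $\mathcal{I}_{K_t}^1$ and $\mathcal{I}_{K_t}^2$ are defined by linear conditions. The zero boundary values of $v - \mathcal{I}_{K_t} v$ follow from $\mathcal{I}_{K_t}^1 v(a) = v(a)$, $\mathcal{I}_{K_t}^1 v(b) = v(b)$, together with $b_{K_t}(a) = b_{K_t}(b) = 0$, so this step is essentially free.

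For the projection property, given $v \in \mathbb{P}_k(K_t)$ I would observe that $v - \mathcal{I}_{K_t}^1 v$ is a polynomial of degree at most $k$ vanishing at both endpoints, hence factors as $b_{K_t} w$ with $w \in \mathbb{P}_{k-2}(K_t)$. Plugging this into the defining equation for $\mathcal{I}_{K_t}^2 v$ and using that $(\phi,\psi) \mapsto \int_{K_t} b_{K_t} \phi\psi \ds$ is an inner product on $\mathbb{P}_{k-2}(K_t)$ (since $b_{K_t}>0$ on the interior) yields $\mathcal{I}_{K_t}^2 v = w$, so $\mathcal{I}_{K_t} v = v$. The case $k=1$ is trivial because then $\mathcal{I}_{K_t} = \mathcal{I}_{K_t}^1$ is the nodal linear interpolant.

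The main obstacle is the commuting diagram \eqref{eq:CommutingIt}. I would prove it by testing against an arbitrary $q \in \mathbb{P}_{k-1}(K_t)$ and checking the orthogonality
\begin{align*}
\int_{K_t} \partial_t(v - \mathcal{I}_{K_t} v)\, q \ds = 0.
\end{align*}
Integration by parts removes the boundary contribution thanks to the already established zero boundary values of $v - \mathcal{I}_{K_t} v$, producing $-\int_{K_t} (v - \mathcal{I}_{K_t} v)\,\partial_t q \ds$. Since $\partial_t q \in \mathbb{P}_{k-2}(K_t)$ and
\begin{align*}
v - \mathcal{I}_{K_t} v = (v - \mathcal{I}_{K_t}^1 v) - b_{K_t} \mathcal{I}_{K_t}^2 v,
\end{align*}
the defining identity of $\mathcal{I}_{K_t}^2 v$ with test function $w_{k-2} = \partial_t q$ forces this integral to vanish. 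Since $\partial_t \mathcal{I}_{K_t} v \in \mathbb{P}_{k-1}(K_t)$, the characterization of the $L^2$ projection yields $\partial_t \mathcal{I}_{K_t} v = \Pi_{\mathbb{P}_{k-1}(K_t)} \partial_t v$. The small subtlety here is ensuring $\partial_t q$ actually lands in $\mathbb{P}_{k-2}(K_t)$ for all $q \in \mathbb{P}_{k-1}(K_t)$, which includes the degenerate case $k = 1$ where both sides of the orthogonality relation reduce to the statement that $\partial_t \mathcal{I}_{K_t}^1 v$ equals the mean of $\partial_t v$.

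The best-approximation identity is then an immediate corollary: the map $v_h \mapsto \partial_t v_h$ sends $\mathbb{P}_k(K_t)$ onto $\mathbb{P}_{k-1}(K_t)$, so
\begin{align*}
\min_{v_h \in \mathbb{P}_k(K_t)} \lVert \partial_t(v - v_h)\rVert_{L^2(K_t)} = \min_{q \in \mathbb{P}_{k-1}(K_t)} \lVert \partial_t v - q\rVert_{L^2(K_t)} = \lVert \partial_t v - \Pi_{\mathbb{P}_{k-1}(K_t)} \partial_t v \rVert_{L^2(K_t)},
\end{align*}
and the right-hand side equals $\lVert \partial_t(v - \mathcal{I}_{K_t} v)\rVert_{L^2(K_t)}$ by the commuting diagram.
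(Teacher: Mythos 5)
Your proof is correct and follows essentially the same route as the paper: the commuting diagram property \eqref{eq:CommutingIt} is obtained exactly as in the paper's proof by integrating by parts (using the zero boundary values of $v - \mathcal{I}_{K_t} v$) and invoking the defining orthogonality of $\mathcal{I}_{K_t}^2 v$ with test function $\partial_t q \in \mathbb{P}_{k-2}(K_t)$, after which the best-approximation identity follows from the surjectivity of $\partial_t \colon \mathbb{P}_k(K_t) \to \mathbb{P}_{k-1}(K_t)$. The only (harmless) divergence is the projection property, which you verify directly via the factorization $v - \mathcal{I}_{K_t}^1 v = b_{K_t} w$ and the weighted inner product $(\phi,\psi)\mapsto \int_{K_t} b_{K_t}\phi\psi\ds$, whereas the paper deduces it from the commuting diagram itself (for $v \in \mathbb{P}_k(K_t)$ one gets $\partial_t (v - \mathcal{I}_{K_t} v) = 0$, and a constant vanishing at the endpoints is zero).
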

\begin{proof}
Let $v\in H^1(K_t)$. Since by definition $v- \mathcal{I}_{K_t} v\in H^1_0(K_t)$, an integration by parts and the definition of $\mathcal{I}_{K_t}^2$ yield for all $w_h\in \mathbb{P}_{k-1}(K_t)$
\begin{align*}
&\int_{K_t} \partial_t (v- \mathcal{I}_{K_t} v)\, w_h\ds = -\int_{K_t} (v- \mathcal{I}_{K_t}v)\, \partial_t w_h \ds\\
&\qquad =  \int_{K_t} b_{K_t} (\mathcal{I}^2_{K_t} v)\, \partial_t w_h \ds -\int_{K_t} (v- \mathcal{I}^1_{K_t} v)\, \partial_t w_h \ds = 0. 
\end{align*}
This proves the commuting diagram property. The commuting diagram property yields the best-approximation property and leads to the projection property.
\end{proof}
By applying the operator everywhere in $\Omega$, the operator extends to a mapping 
\begin{align*}
\mathcal{I}_{K_t} \colon H^1(K_t;H^{-1}(\Omega)) \to \mathbb{P}_k(K_t;H^{-1}(\Omega)).
\end{align*} 
Applying $\mathcal{I}_{K_t}$ on each time cell $K_t\in \tria_t$ leads to the operator $\mathcal{I}_t \colon L^2(\mathcal{J};H^{-1}(\Omega)) \to \mathcal{L}_k^1(\tria_t;H^{-1}(\Omega))$ with
\begin{align}\label{eq:DefItJ}
(\mathcal{I}_t v)|_{K_t} \coloneqq \mathcal{I}_{K_t} v|_{K_t}\qquad\text{for all }v\in H^1(\mathcal{J};H^{-1}(\Omega))\text{ and }K_t\in \tria_t.
\end{align}
\section{Tensor product meshes}\label{sec:interpolOperatorsTensor}
This section introduces interpolation operators for special cylindrical partitions of $Q$, namely tensor product meshes $\tria = \tria_t \otimes \tria_x$ with a partition $\tria_t$ of the time interval $\mathcal{J}$ and a conforming simplicial partition $\tria_x$ of the domain $\Omega$. Such partitions are of special interest since classical time-marching schemes can be seen as a space-time ansatz using such meshes and ansatz spaces $X_h = \mathcal{L}^1_\ell(\tria_t;\mathcal{L}^1_{\ell,0}(\tria_x))$ as well as some specific discretization of the test space $L^2(\mathcal{J};H^1_0(\Omega))$; see for example \cite{UrbanPatera14,Feischl22} for the Crank-Nicolson scheme. 
We introduce and investigate a suitable interpolation operator in the first subsection. The second subsection introduces and investigates an interpolation operator for mixed schemes.
\subsection{Interpolation operator $\mathcal{I}_X^\otimes$}\label{sec:IX}
Due to the tensor product structure of the mesh $\tria = \tria_t\otimes \tria_x$, the discrete space $X_h$ defined in \eqref{eq:DefXh} equals $X_h = \mathcal{L}^1_\ell(\tria_t) \otimes \mathcal{L}^1_{\ell,0}(\tria_x)$.
This allows for the direct application of the operators 
\begin{align*}
\begin{aligned}
&\mathcal{I}_x\colon L^2(\mathcal{J};H^{-1}(\Omega)) \to L^2(\mathcal{J};\mathcal{L}^1_{\ell,0}(\tria_x))&&\text{defined in \eqref{eq:defIxJ}},\\
&\mathcal{I}_t \colon H^1(\mathcal{J};H^{-1}(\Omega)) \to \mathcal{L}^1_k(\tria_t;H^{-1}(\Omega))&&\text{defined in \eqref{eq:DefItJ}}.
\end{aligned}
\end{align*}
More precisely, we set the interpolation operator $\mathcal{I}_X^\otimes \colon X \to X_h$ as the composition
\begin{align}\label{eq:defIX}
\mathcal{I}_X^\otimes  \coloneqq \mathcal{I}_x \circ \mathcal{I}_t = \mathcal{I}_t \circ \mathcal{I}_x.
\end{align}
This operator has the following beneficial properties involving the local mesh sizes $h_x(K) \coloneqq \textup{diam}(K_x)$ and $h_t(K) \coloneqq |K_t|$ for all $K = K_t \times K_x \in \tria$.
\begin{theorem}[Interpolation operator $\mathcal{I}_X^\otimes $]\label{thm:IX}
The operator $\mathcal{I}_X^\otimes $ satisfies for all $v\in X$
\begin{align*}
\lVert \nabla_x (v-\mathcal{I}_X^\otimes  v)\rVert_{L^2(Q)}^2 &\lesssim \sum_{K\in \tria} \min_{v_{x}\in L^2(K_t;\mathcal{L}^1_{\ell,0}(\tria_x))} \lVert \nabla_x (v-v_{x})\rVert_{L^2(K_t;L^2(\omega_{K_x}))}^2 \\
&\quad + \frac{h_t(K)^2}{h_x(K)^4} \min_{v_{t}\in \mathbb{P}_k(K_t;H^{-1}(\omega_{K_x}))} \lVert \partial_t (v-v_{t}) \rVert_{L^2(K_t;H^{-1}(\omega_{K_x}))}^2.  
\end{align*}
Moreover, we have for all $v\in H^1(\mathcal{J};H^{-1}(\Omega))$ the upper bound
\begin{align*}
\lVert \partial_t (v - \mathcal{I}_X^\otimes v)\rVert_{L^2(\mathcal{J};H^{-1}(\Omega))}^2 &\lesssim \sum_{K\in \tria} \min_{\xi_{x}\in L^2(K_t;\mathcal{L}^1_{\ell,0}(\tria_x))} \lVert \partial_t v - \xi_{x} \rVert_{L^2(K_t;H^{-1}(\omega_{K_x}))}^2 \\
&\quad + \sum_{K_t\in \tria_t} \min_{v_{t} \in \mathbb{P}_k(K_t;H^{-1}(\Omega))} \lVert \partial_t (v-v_{t})\rVert_{L^2(K_t;H^{-1}(\Omega))}^2.
\end{align*}
\end{theorem}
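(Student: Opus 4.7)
My plan is to handle both bounds with the factorization $\mathcal{I}_X^\otimes = \mathcal{I}_x\circ\mathcal{I}_t = \mathcal{I}_t\circ\mathcal{I}_x$ and a splitting that isolates the spatial from the temporal error. For the gradient bound I would write
\begin{align*}
v - \mathcal{I}_X^\otimes v = (v - \mathcal{I}_x v) + \mathcal{I}_x(v - \mathcal{I}_t v).
\end{align*}
The first piece is controlled by the local $H^1$-best approximation property of $\mathcal{I}_x$ (a Scott--Zhang-type feature inherited from the $H^1$-stability of the operator of \cite{DieningStornTscherpel22}), applied pointwise in time; this reproduces the first term on the right-hand side. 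The second piece is piecewise polynomial in space, so I chain the polynomial inverse inequality $\|\nabla_x u_h\|_{L^2(K_x)}\lesssim h_x^{-1}\|u_h\|_{L^2(K_x)}$, Lemma~\ref{lem:inverseEst} giving $\|u_h\|_{L^2(K_x)}\lesssim h_x^{-1}\|u_h\|_{H^{-1}(K_x)}$, and local $H^{-1}$-stability of $\mathcal{I}_x$ to obtain
\begin{align*}
\|\nabla_x \mathcal{I}_x w\|_{L^2(K_x)}\lesssim h_x^{-2}\,\|w\|_{H^{-1}(\omega_{K_x})}.
\end{align*}
Taking $w = v - \mathcal{I}_t v$ and integrating over $K_t$, I exploit $v - \mathcal{I}_{K_t} v\in H^1_0(K_t)$ from Theorem~\ref{thm:It} and the one-dimensional \Poincare inequality to gain a factor $h_t$, and then invoke the tensorized best-approximation property of $\mathcal{I}_t$ in $L^2(K_t;H^{-1}(\omega_{K_x}))$ to convert the time derivative into the stated minimum. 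Summation over $K\in\tria$ produces the $h_t^2/h_x^4$-weighted term.

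For the $\partial_t$ estimate I would use the commuting diagram from Theorem~\ref{thm:It} together with the fact that $\mathcal{I}_x$ acts only in space and therefore commutes with $\partial_t$. This gives $\partial_t\mathcal{I}_X^\otimes v = \mathcal{I}_x\,\Pi_{\mathbb{P}_{k-1}(K_t)}\partial_t v$ on each time cell $K_t$, and hence the splitting
\begin{align*}
\partial_t(v - \mathcal{I}_X^\otimes v) = (\partial_t v - \mathcal{I}_x\,\partial_t v) + \mathcal{I}_x\bigl((\mathrm{id}-\Pi_{\mathbb{P}_{k-1}(K_t)})\,\partial_t v\bigr).
\end{align*}
The first summand is bounded by integrating the vertex-patch error equivalence of Theorem~\ref{thm:Ix} over $s\in K_t$ and regrouping the resulting sum over vertices into a sum over cells, which yields the first term on the right-hand side. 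For the second summand I apply $H^{-1}$-stability of $\mathcal{I}_x$ pointwise in time and use that $\partial_t\bigl(\mathbb{P}_k(K_t;H^{-1}(\Omega))\bigr) = \mathbb{P}_{k-1}(K_t;H^{-1}(\Omega))$, so the $L^2$-projection error equals the best approximation $\min_{v_t\in \mathbb{P}_k(K_t;H^{-1}(\Omega))}\|\partial_t(v - v_t)\|_{L^2(K_t;H^{-1}(\Omega))}$, giving the second term after summation over $K_t\in\tria_t$.

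The main obstacle I anticipate is not any single estimate but the careful bookkeeping of the local stability properties of $\mathcal{I}_x$: Theorem~\ref{thm:Ix} is stated only as a global error bound together with a vertex-patch localization of the error, so I need to invoke explicitly the patchwise $H^{-1}$- and $H^1$-stability of $\mathcal{I}_x$ proved in \cite{DieningStornTscherpel22} and to relate the vertex patches $\omega_{x,j}^2$ appearing there to the cell patches $\omega_{K_x}$ appearing in the statement, so that the final summation produces the clean right-hand sides with finite overlap constants.
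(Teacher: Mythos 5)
Your proposal follows essentially the same route as the paper's proof: the identical splitting $v-\mathcal{I}_X^\otimes v = (v-\mathcal{I}_x v) + \mathcal{I}_x(v-\mathcal{I}_t v)$, the same chain of inverse estimates (the standard gradient inverse estimate combined with Lemma~\ref{lem:inverseEst}) and local $H^{-1}$-stability of $\mathcal{I}_x$ producing the $h_t(K)^2/h_x(K)^4$ weight via $v-\mathcal{I}_{K_t}v\in H^1_0(K_t)$ and the commuting property of Theorem~\ref{thm:It}, and for the time-derivative bound the same use of $\partial_t\mathcal{I}_x=\mathcal{I}_x\partial_t$, the localization from Theorem~\ref{thm:Ix}, and $H^{-1}$-stability of $\mathcal{I}_x$. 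Your only deviation is that you spell out intermediate steps (and the bookkeeping of the patchwise stability of $\mathcal{I}_x$ from the cited reference) that the paper compresses into citations, so the two arguments coincide.
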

\begin{proof}
Let $v\in X$. 
The triangle inequality yields
\begin{align}\label{eq:Splitt}
\lVert \nabla_x (v-\mathcal{I}_X^\otimes v)\rVert_{L^2(Q)} & \leq  \lVert \nabla_x (v - \mathcal{I}_x v)\rVert_{L^2(Q)} +  \lVert \nabla_x \mathcal{I}_x (v-\mathcal{I}_t v)\rVert_{L^2(Q)}.
\end{align}
The approximation properties displayed in Theorem~\ref{thm:Ix} yield for the first addend
\begin{align*}
\lVert \nabla_x (v - \mathcal{I}_x v)\rVert_{L^2(Q)}^2 \eqsim \sum_{K\in \tria} \min_{v_{x}\in L^2(K_t;\mathcal{L}^1_{\ell,0}(\tria_x))} \lVert \nabla_x (v-v_{x})\rVert_{L^2(K_t;L^2(\omega_{K_x}))}^2.
\end{align*}
Due to inverse estimates (Lemma~\ref{lem:inverseEst}) and Theorem~\ref{thm:It} the second addend satisfies
\begin{align}\label{eq:changeOfNorms}
\begin{aligned}
&\lVert \nabla_x \mathcal{I}_x (v-\mathcal{I}_t v)\rVert_{L^2(Q)}^2 = 
\sum_{K\in \tria} \lVert \nabla_x \mathcal{I}_x (v-\mathcal{I}_t v)\rVert_{L^2(K)}^2\\
& \qquad\lesssim \sum_{K\in \tria} h_x(K)^{-4} \lVert v-\mathcal{I}_t v\rVert_{L^2(K_t;H^{-1}(\omega_{K_x}))}^2\\
& \qquad\lesssim \sum_{K\in \tria} \frac{h_t(K)^2}{h_x(K)^4} \min_{v_{t}\in \mathbb{P}_k(K_t;H^{-1}(\Omega))} \lVert \partial_t (v-v_{t}) \rVert_{L^2(K_t;H^{-1}(\omega_{K_x}))}^2.  
\end{aligned}
\end{align}
This proves the first inequality in the theorem.

Let $v\in L^2(\mathcal{J};H^{-1}(\Omega))$. Since $\mathcal{I}_x \partial_t = \partial_t\mathcal{I}_x$, we have
\begin{align*}
\lVert \partial_t( v  -\mathcal{I}_x v) \rVert_{L^2(\mathcal{J};H^{-1}(\Omega))}& \leq \lVert \partial_t v - \mathcal{I}_x \partial_t v\rVert_{L^2(\mathcal{J};H^{-1}(\Omega))}\\
&\quad + \lVert \mathcal{I}_x \partial_t  (v -  \mathcal{I}_t v) \rVert_{L^2(\mathcal{J};H^{-1}(\Omega))}.
\end{align*}
An application of Theorem~\ref{thm:Ix} to the first addend yields
\begin{align*}
\lVert \partial_t v - \mathcal{I}_x \partial_t v\rVert_{L^2(\mathcal{J};H^{-1}(\Omega))}^2 \lesssim \sum_{K\in \tria} \min_{\xi_{x}\in L^2(K_t;\mathcal{L}^1_{\ell,0}(\tria_x))} \lVert \partial_t v - \xi_{x} \rVert_{L^2(K_t;H^{-1}(\omega_{K_x}))}^2.
\end{align*} 
The $H^{-1}(\Omega)$ stability of $\mathcal{I}_x$ and the approximation properties of $\mathcal{I}_t$ yield 
\begin{align*}
 \lVert \mathcal{I}_x \partial_t  (v -  \mathcal{I}_t v) \rVert_{L^2(\mathcal{J};H^{-1}(\Omega))}^2& \lesssim  \lVert \partial_t  (v -  \mathcal{I}_t v) \rVert_{L^2(\mathcal{J};H^{-1}(\Omega))}^2\\
 & = \sum_{K_t\in \tria_t} \min_{v_{t}\in \mathbb{P}_k(K_t;H^{-1}(\Omega))} \lVert \partial_t (v - v_{t}) \rVert_{L^2(K_t;H^{-1}(\Omega))}^2.
\end{align*}
Combining the estimates concludes the proof.
\end{proof}
Due to the continuous embedding $X\hookrightarrow C^0(\mathcal{J};L^2(\Omega))$ in Lemma~\ref{lem:embedding}, we have for all $v\in X$ and $t\in \mathcal{J} = [0,T]$ the upper bound
\begin{align*}
&\lVert v(t) - (\mathcal{I}_X^\otimes v)(t) \rVert_{L^2(\Omega)}\\
&\qquad \lesssim (1+T^{-1})\lVert \nabla_x (v-\mathcal{I}_X^\otimes v)\rVert_{L^2(Q)} + \lVert \partial_t (v-\mathcal{I}_X^\otimes v)\rVert_{L^2(\mathcal{J};H^{-1}(\Omega))}.
\end{align*}
The following result improves this bound. We set the diameters $h_t(K_t) \coloneqq |K_t|$ and $h_x(K_x) \coloneqq \textup{diam}(K_x)$ for all $K_t\in \tria_t$ and $K_x\in \tria_x$.
\begin{theorem}[Interpolation error in $C^0(\mathcal{J};L^2(\Omega))$]
Let $t\in K_t\in \tria_t$ and $v\in X$. Then we have
\begin{align*}
&\lVert (v - \mathcal{I}_X^\otimes v)(t) \rVert_{L^2(\Omega)}^2 \lesssim \min_{v_t\in \mathbb{P}_k(K_t;H^{-1}(\Omega))} \lVert \partial_t  (v-v_t)\rVert^2_{L^2(K_t;H^{-1}(\Omega))} \\
&\qquad+ \sum_{K_x\in \tria_x} \left(1+\frac{h_x(K_x)^2}{h_t(K_t)}\right) \min_{v_{x}\in L^2(K_t;\mathcal{L}^1_{\ell,0}(\tria_x))} \lVert \nabla_x( v - v_{x}) \rVert_{L^2(K_t;L^2(\omega_{K_x}))}^2\\
&\qquad + \sum_{K_x\in \tria_x} \frac{h_t(K_t)}{h_x(K_x)^2} \min_{v_{t} \in \mathbb{P}_k(K_t;H^{-1}(\Omega))} \lVert \partial_t (v-v_{t})\rVert_{L^2(K_t;H^{-1}(\omega_{K_x}))}^2 \\
&\qquad + \sum_{K_x\in \tria_x} \min_{\xi_x \in L^2(K_t;\mathcal{L}^1_{\ell,0}(\tria_x)))} \lVert \partial_t v-\xi_x \rVert_{L^2(K_t;H^{-1}(\omega_{K_x}))}^2.
\end{align*}
\end{theorem}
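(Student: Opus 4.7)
The plan is to apply a localized version of the embedding behind Lemma~\ref{lem:embedding} to $w \coloneqq v - \mathcal{I}_X^\otimes v$ on the time slab $K_t \times \Omega$, and then control each of the resulting three terms separately using Theorem~\ref{thm:IX} for the $\nabla_x$- and $\partial_t$-integrals and Theorem~\ref{thm:paraPoincare} for the $L^2$-integral. Concretely, repeating the fundamental-theorem-of-calculus argument in the proof of Lemma~\ref{lem:embedding} with $\mathcal{J}$ replaced by $K_t$ (so that the averaging in $\tau$ is over a set of measure $h_t(K_t)$) yields
\begin{equation*}
\lVert w(t)\rVert_{L^2(\Omega)}^2 \lesssim h_t(K_t)^{-1}\lVert w\rVert_{L^2(K_t\times \Omega)}^2 + \lVert \nabla_x w\rVert_{L^2(K_t\times \Omega)}^2 + \lVert \partial_t w\rVert_{L^2(K_t;H^{-1}(\Omega))}^2.
\end{equation*}

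For the last two summands I would invoke the cell-wise estimates that already appear inside the proof of Theorem~\ref{thm:IX}: restricting the sums $\sum_{K\in\tria}$ there to cells $K=K_t\times K_x$ with the fixed $K_t$ produces exactly the first and fourth summands of the target bound (the $A$-like term and the $F$-term), together with a ``constant $1$'' weight on $\min \lVert \nabla_x(v-v_x)\rVert^2$ and an $(h_t/h_x^2)^2$ weight on $\min \lVert \partial_t(v-v_t)\rVert^2$. For the $L^2$-term $h_t^{-1}\lVert w\rVert_{L^2(K_t\times \Omega)}^2$, I would use the local $L^2$-stability of $\mathcal{I}_X^\otimes$ (inherited from the local $L^2$-stability of $\mathcal{I}_x$ and of the polynomial projection $\mathcal{I}_t$) to pass to
\begin{equation*}
\lVert w\rVert_{L^2(K)}^2 \lesssim \min_{v_h\in \mathbb{P}_k(K_t)\otimes \mathbb{P}_\ell(K_x)} \lVert v-v_h\rVert_{L^2(K_t\times \omega_{K_x})}^2,
\end{equation*}
and then apply the second formulation of Theorem~\ref{thm:paraPoincare} on the cylinder $K_t\times \omega_{K_x}$; this bounds the right-hand side by $h_x^2\min\lVert \nabla_x(v-v_x)\rVert^2 + (h_t/h_x)^2\min\lVert\partial_t(v-v_t)\rVert_{L^2(K_t;H^{-1}(\omega_{K_x}))}^2$. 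Dividing by $h_t$ produces the missing weights $h_x^2/h_t$ and $h_t/h_x^2$, which when added to the contributions of the previous step assemble into the coefficients $(1+h_x^2/h_t)$ and $h_t/h_x^2$ claimed in the theorem.

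The main obstacle is reconciling the $(h_t/h_x^2)^2$-weighted contribution to the $\min \lVert \partial_t(v-v_t)\rVert^2$ term coming from the Theorem~\ref{thm:IX}-bound on $\nabla_x w$ with the sharper weight $h_t/h_x^2$ appearing in the target. This is automatic in the (sub-)parabolic regime $h_t\lesssim h_x^2$ (which is the regime of interest motivated by Theorem~\ref{thm:paraPoincare}); more generally one has to exploit that the minimization spaces appearing in the target bound differ slightly from those in Theorem~\ref{thm:IX} (in particular, the third summand minimizes over $\mathbb{P}_k(K_t;H^{-1}(\Omega))$ rather than $\mathbb{P}_k(K_t;H^{-1}(\omega_{K_x}))$), so that a direct inverse-estimate argument on $\mathcal{I}_X^\otimes v$ can be substituted for the Theorem~\ref{thm:IX}-bound on $\nabla_x w$ to furnish the sharper power. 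A secondary technicality is writing out the local $L^2$-stability of $\mathcal{I}_X^\otimes$ in exactly the form needed in the second step.
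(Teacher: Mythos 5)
Your scaffolding coincides with the paper's proof: localize Lemma~\ref{lem:embedding} to the slab $K_t\times\Omega$ (producing the weight $h_t(K_t)^{-1}$ on the $L^2$ term), use the time-local estimates behind Theorem~\ref{thm:IX} for the $\nabla_x$- and $\partial_t$-integrals, and treat the $L^2$ term separately so that dividing by $h_t(K_t)$ creates the weights $h_x^2/h_t$ and $h_t/h_x^2$. The genuine gap is in your treatment of the $L^2$ term: the claimed local $L^2$-quasi-optimality
\begin{equation*}
\lVert v-\mathcal{I}_X^\otimes v\rVert_{L^2(K)}^2 \lesssim \min_{v_h\in\mathbb{P}_k(K_t)\otimes\mathbb{P}_\ell(K_x)}\lVert v-v_h\rVert_{L^2(K_t\times\omega_{K_x})}^2
\end{equation*}
is false, because $\mathcal{I}_t$ is \emph{not} $L^2(K_t)$-stable: by \eqref{eq:DefIt} it reproduces the endpoint values $v(a)$ and $v(b)$, which are not controlled by $\lVert v\rVert_{L^2(K_t)}$. (Take $v=v_t\phi_x$ with $\phi_x\in\mathcal{L}^1_{\ell,0}(\tria_x)$ fixed and $v_t$ of small $L^2(K_t)$-norm but large value at an endpoint of $K_t$; then $\mathcal{I}_X^\otimes v=(\mathcal{I}_{K_t}v_t)\,\phi_x$ is large in $L^2(K)$ while the right-hand side above stays small. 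Remark~\ref{rem:sabL2Hmin} exists precisely because of this defect: $L^2$-type stability requires replacing $\mathcal{I}_t$ by a Scott--Zhang type operator.) The paper instead bounds the $L^2$ term by rerunning the splitting from the proof of Theorem~\ref{thm:IX}, namely $v-\mathcal{I}_X^\otimes v=(v-\mathcal{I}_x v)+\mathcal{I}_x(v-\mathcal{I}_t v)$: the first part is handled by the local $L^2$ bound of Theorem~\ref{thm:Ix} plus a Poincar\'e argument (weight $h_x(K_x)^2$), the second by a \emph{single} application of the inverse estimate of Lemma~\ref{lem:inverseEst} (rather than the two inverse estimates producing $h_x^{-4}$ in \eqref{eq:changeOfNorms}), the local $H^{-1}$-stability of $\mathcal{I}_x$, and the approximation property of $\mathcal{I}_t$ from Theorem~\ref{thm:It} (weight $h_t(K_t)^2/h_x(K_x)^2$). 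This yields exactly the two weights your parabolic-Poincar\'e step was meant to produce, so your proof is repaired by substituting that argument; but as written the step rests on a stability property the operator does not possess, so it is not a ``secondary technicality.''

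Concerning your ``main obstacle'': the concern is legitimate, and the paper's own proof does not resolve it either. Combining the localized embedding with the first bound of Theorem~\ref{thm:IX} unavoidably produces a term weighted by $h_t(K_t)^2/h_x(K_x)^4$ in front of $\min_{v_t}\lVert\partial_t(v-v_t)\rVert^2_{L^2(K_t;H^{-1}(\omega_{K_x}))}$, coming from \eqref{eq:changeOfNorms}; this is dominated by the stated weight $h_t(K_t)/h_x(K_x)^2$ only under (sub-)parabolic scaling $h_t\lesssim h_x^2$, and the paper's closing sentence (``Combining this estimate \dots concludes the proof'') silently drops it. Your first remedy (restricting to that regime) is the honest reading of the statement; your second remedy does not work, since enlarging or shrinking the minimization space changes the value of the minimum but not the power of the mesh size, and no inverse estimate applicable to a general $v\in X$ converts $h_t^2/h_x^4$ into $h_t/h_x^2$. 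On this point you are more careful than the source.
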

\begin{proof}
Let $t\in K_t\in \tria_t$ and $v\in X$. Lemma~\ref{lem:embedding} reveals that
\begin{align*}
\lVert (v - \mathcal{I}_X^\otimes v)(t) \rVert_{L^2(\Omega)}^2 &\leq \frac{1}{h_t(K_t)}\, \lVert v - \mathcal{I}_X^\otimes v \rVert_{L^2(K_t;L^2(\Omega))}^2  + \lVert \nabla_x( v - \mathcal{I}_X^\otimes v) \rVert_{L^2(K_t;L^2( \Omega))}^2\\
&\quad + \lVert \partial_t (v - \mathcal{I}_X^\otimes v) \rVert^2_{L^2(K_t;H^{-1}(\Omega))}.
\end{align*}
The arguments in the proof of Theorem~\ref{thm:IX} lead to the bound
\begin{align*}
&\lVert v - \mathcal{I}_X^\otimes v \rVert_{L^2(K_t;L^2(\Omega))}^2 = \sum_{K_x\in \tria_x} \lVert v - \mathcal{I}_X^\otimes v \rVert_{L^2(K_t;L^2(K_x))}^2 \\
&\lesssim \sum_{K_x\in \tria_x}  h_x(K_x)^2 \min_{v_{x}\in L^2(K_t;\mathcal{L}^1_{\ell,0}(\tria_x))} \lVert \nabla_x( v - v_{x}) \rVert_{L^2(K_t;L^2(\omega_{K_x}))}^2\\
&\quad + \sum_{K_x\in \tria_x} \frac{h_t(K_t)^2}{ h_x(K_x)^2} \min_{v_{t} \in \mathbb{P}_k(K_t;H^{-1}(\Omega))} \lVert \partial_t (v-v_{t})\rVert_{L^2(K_t;H^{-1}(\omega_{K_x}))}^2.
\end{align*} 
Combining this estimate with the approximation properties displayed in Theorem~\ref{thm:IX} concludes the proof.
\end{proof}
We conclude this subsection with two remarks.
\begin{remark}[Stability in $L^2(\mathcal{J};H^1_0(\Omega))$]\label{rem:sabL2Hmin}
While the operator $\mathcal{I}_X^\otimes$ is always stable in $H^1(\mathcal{J};H^{-1}(\Omega))$, its (uniform) stability in $X$ requires the parabolic scaling $h_t(K) \eqsim h_x(K)^2$ for all $K\in \tria$. This is due to the change of the norm $\lVert \nabla_x \, \bigcdot \rVert_{L^2(K_t;L^2(K_x))}$ to $\lVert \partial_t\, \bigcdot \rVert_{H^1(K_t;H^{-1}(K_x))}$ in \eqref{eq:changeOfNorms}. It is possible to avoid this change of norms when $\mathcal{I}_t$ is replace by some $L^2$ stable projection operator $\mathcal{I}_t^{L^2}\colon L^2(\mathcal{J}) \to \mathcal{L}^1_k(\tria_t)$ like the Scott-Zhang interpolation operator \cite{ScottZhang90} as done in \cite[Sec.~4.2]{DieningStornTscherpel22}.
Set $(\mathcal{I}_X^\otimes)' \coloneqq \mathcal{I}_x \circ \mathcal{I}_t^{L^2}$ and assume that neighboring time cells $K_t,K_t'\in \tria_t$ are of equivalent size. A similar proof as in Theorem~\ref{thm:IX} leads for all $v\in L^2(\mathcal{J};H^1_0(\Omega))$ to
\begin{align*}
&\lVert \nabla_x (v-(\mathcal{I}_X^\otimes)' v)\rVert_{L^2(Q)}^2\\ 
&\lesssim \sum_{K\in \tria} \min_{v_{x}\in L^2(K_t;\mathcal{L}^1_{\ell,0}(\tria_x))} \lVert \nabla_x (v-v_{x})\rVert_{L^2(K_t;L^2(\omega_{K_x}))}^2 \\
&\quad\qquad + \min_{W_{t}\in \mathcal{L}^1_k(\tria_t;L^2(\Omega;\mathbb{R}^d))} \lVert \nabla_x v- W_{t} \rVert_{L^2(\omega_{K_t};L^2(\omega_{K_x}))}^2.  
\end{align*}
Furthermore, it satisfies for all $v\in H^1(\mathcal{J};H^{-1}(\Omega))$
\begin{align*}
&\lVert \partial_t (v - (\mathcal{I}_X^\otimes)' v)\rVert_{L^2(\mathcal{J};H^{-1}(\Omega))}^2\\
 &\lesssim \sum_{K\in \tria} \min_{\xi_{x}\in L^2(K_t;\mathcal{L}^1_{\ell,0}(\tria_x))} \lVert \partial_t v - \xi_{x} \rVert_{L^2(K_t;H^{-1}(\omega_{K_x}))}^2 \\
&\quad + \sum_{K_t\in \tria_t} \min_{v_{t} \in \mathcal{L}^1_k(\tria_t;H^{-1}(\Omega))} \lVert \partial_t (v-v_{t})\rVert_{L^2(\omega_{K_t};H^{-1}(\Omega))}^2.
\end{align*}
Note that this operator has increased the domain of dependence with respect to the time direction compared to the operator in Theorem~\ref{thm:IX}.
\end{remark}
\begin{remark}[Localization of the $H^1(K_t;H^{-1}(\Omega))$ norm]\label{rem:LoclHmin}
While the interpolation error for $\lVert \nabla_x (v-\mathcal{I}_X^\otimes  v)\rVert_{L^2(Q)}$ decomposes into localized norms, the interpolation error $\lVert \partial_t (v - \mathcal{I}_X^\otimes v)\rVert_{L^2(\mathcal{J};H^{-1}(\Omega))}$ localizes only in time. Lemma~\ref{lem:LocHminOne} shows that it is possible to localize further but at the cost of negative powers of the local mesh size, that is for all $v\in H^1(\mathcal{J};H^{-1}(\Omega))$
\begin{align*}
& \sum_{K_t\in \tria_t} \min_{v_{t} \in \mathbb{P}_k(K_t;H^{-1}(\Omega))} \lVert \partial_t (v-v_{t})\rVert_{L^2(K_t;H^{-1}(\Omega))}^2\\
& \qquad\lesssim \sum_{K\in \tria} h_x(K)^{-2} \min_{v_{t} \in \mathbb{P}_k(K_t;H^{-1}(\omega_{K_x}))} \lVert \partial_t (v-v_{t})\rVert_{L^2(K_t;H^{-1}(\omega_{K_x}))}^2.  
\end{align*}
This upper bound is indeed sharp, as the following consideration shows. 

Suppose there exists with some $s<2$ for all $v\in H^1(\mathcal{J};H^{-1}(\Omega))$ an estimate
\begin{align*}
\lVert \partial_t (v- \mathcal{I}_X^\otimes v)\rVert_{L^2(\mathcal{J};H^{-1}(\Omega))}^2 & \lesssim \sum_{K\in \tria} \min_{v_{x}\in L^2(K_t;\mathcal{L}^1_{\ell,0}(\tria_x))} \lVert \partial_t (v - v_{x}) \rVert_{L^2(K_t;H^{-1}(\omega_{K_x}))}^2\\
& \quad+ h_x(K)^{-s} \min_{v_{t}\in \mathbb{P}_k(K_t;H^{-1}(\omega_{K_x}))} \lVert \partial_t (v - v_{t}) \rVert_{L^2(K_t;H^{-1}(\omega_{K_x}))}^2.
\end{align*}
The estimate holds in particular for functions $w = w_t w_{x}$ with $w_t\in H^1(\mathcal{J})$ and $w_{x} \in \mathcal{L}^1_{\ell,0}(\tria_x)$, that is
\begin{align*}
& \lVert \partial_t (w_t - \mathcal{I}_t w_t) \rVert_{L^2(\mathcal{J})}^2 \lVert w_{x} \rVert_{H^{-1}(\Omega)}^2 = \lVert \partial_t (w - \mathcal{I}_X^\otimes  w)\rVert_{L^2(\mathcal{J};H^{-1}(\Omega))}^2 \\
&\qquad \lesssim \sum_{K\in \tria} h_x(K)^{-s} \min_{w_{h,t}\in \mathbb{P}_k(K_t;H^{-1}(\omega_{K,x}))} \lVert \partial_t (w  - w_{t}) \rVert_{L^2(K_t;H^{-1}(\omega_{K,x}))}^2\\
&\qquad\lesssim \sum_{K\in \tria} h_x(K)^{2-s} \lVert \partial_t w \rVert_{L^2(K_t;L^2(\omega_{K_x}))}^2.
\end{align*}
Hence, we have
\begin{align*}
\lVert \partial_t (w_t - \mathcal{I}_t w_t) \rVert_{L^2(\mathcal{J})}\lVert w_{x} \rVert_{H^{-1}(\Omega)} \lesssim \max_{K\in \tria}\, h_x(K)^{1-s/2} \lVert \partial_t w_t \rVert_{L^2(\mathcal{J})} \lVert w_{x}\rVert_{L^2(\Omega)}.
\end{align*}
This proves convergence of $\lVert \partial_t (w_t - \mathcal{I}_t w_t) \rVert_{L^2(\mathcal{J})}$ independent of the time discretization, which cannot be possible.
\end{remark}

\subsection{Commuting interpolation operator $\mathcal{I}^\otimes_\Lambda$}\label{subsec:ILambda}
Simultaneous space-time minimal residual methods \cite{FuehrerKarkulik19,GantnerStevenson20,GantnerStevenson22,DieningStorn21}
and time marching schemes in mixed form \cite{JohnsonThomee81,BauseRaduKoecher17,KorikachePaquet22} involve
the time-space divergence $\textup{div}\, (v,\tau) \coloneqq \partial_t v + \textup{div}_x \tau$ and the related space
\begin{align*}
\Lambda = \lbrace (v,\tau) \in L^2(\mathcal{J};H^1_0(\Omega)) \times L^2(Q;\mathbb{R}^d)\colon \textup{div}\,(v,\tau) \in L^2(Q)\rbrace.
\end{align*}
We set for all $(v,\tau) \in \Lambda$ the squared norm
\begin{align*}
\lVert (v,\tau) \rVert_\Lambda^2 \coloneqq \lVert \nabla_x v \rVert_{L^2(Q)}^2 + \lVert \tau \rVert_{L^2(Q)}^2 + \lVert \textup{div}\, (v,\tau)\rVert_{L^2(Q)}^2.
\end{align*}
Any function $(v,\tau) \in \Lambda$ satisfies \cite[Lem.~2.1]{GantnerStevenson20}
\begin{align}\label{eq:partTboundedbyLambda}
\lVert \partial_t v \rVert_{L^2(\mathcal{J};H^{-1}(\Omega))} \lesssim \lVert \tau \rVert_{L^2(Q)} + \lVert \textup{div}\, (v,\tau)\rVert_{L^2(Q)} \lesssim \lVert (v,\tau)\rVert_\Lambda.
\end{align}
In particular, we have with $\Sigma \coloneqq L^2(\mathcal{J};H(\textup{div}_x,\Omega))$ and $H(\textup{div}_x,\Omega) \coloneqq \lbrace q \in L^2(\Omega;\mathbb{R}^d)\colon \textup{div}_x q \in L^2(\Omega)\rbrace$ with spacial divergence $\textup{div}_x$ the inclusion
\begin{align*}
\Lambda = \lbrace (v,\tau) \in X \times L^2(Q;\mathbb{R}^d)\colon  \textup{div}\,(v,\tau) \in L^2(Q)\rbrace \subset X \times \Sigma.
\end{align*}
Let $\tria = \tria_t \otimes \tria_x$ be a tensor product mesh with conforming triangulations $\tria_t$ of $\mathcal{J}$ and $\tria_x$ of $\Omega$. Set the Raviart-Thomas finite element space $RT_\ell(\tria_x)$, which reads with identity mapping $\textup{id}\colon \Omega \to \Omega$ 
\begin{align*}
RT_\ell(\tria_x) \coloneqq \lbrace \tau \in H(\textup{div}_x,\Omega)\colon \tau|_{K_x} \in \mathbb{P}_\ell(K_x) + \textup{id} \cdot \mathbb{P}_\ell(K_x;\mathbb{R}^d)\rbrace.
\end{align*}
Let $\Pi_{\mathcal{L}^0_{\ell-1}(\tria_x)}\colon L^2(\Omega)\to \mathcal{L}^0_{\ell-1}(\tria_x)$ be the $L^2(\Omega)$ orthogonal projector onto $\mathcal{L}^0_{\ell-1}(\tria_x)$.
\begin{theorem}[Commuting interpolation operator $\mathcal{I}_{RT}$]\label{thm:Irt}
There exists an interpolation operator $\mathcal{I}_{RT}\colon L^2(\Omega;\mathbb{R}^d) \to RT_\ell(\tria_x)$ with the commuting diagram property
\begin{align*}
\textup{div}_x \mathcal{I}_{RT} \tau = \Pi_{\mathcal{L}^0_{\ell}(\tria_x)} \textup{div}_x \tau\qquad\text{for all }\tau \in H(\textup{div}_x,\Omega).
\end{align*}
Moreover, it has for all $p\in L^2(\Omega;\mathbb{R}^d)$ the approximation property
\begin{align*}
\lVert p - \mathcal{I}_{RT} p \rVert_{L^2(\Omega)}^2 \eqsim \sum_{K_x \in \tria_x} \min_{p_h\in RT_\ell(\tria_x)} \lVert p - p_h \rVert_{L^2(\omega_{K_x})}^2.
\end{align*}
\end{theorem}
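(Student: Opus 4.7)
The plan is to adapt a commuting quasi-interpolation construction for Raviart--Thomas spaces that is well defined on $L^2(\Omega;\mathbb{R}^d)$, e.g.\ in the spirit of Sch\"oberl or Ern--Guermond. The canonical Raviart--Thomas interpolant $\Pi_{RT}^{\mathrm{can}}$ (defined via face and interior moments) enjoys the commuting diagram property on $H(\textup{div}_x,\Omega)\cap L^p(\Omega;\mathbb{R}^d)$ for $p>2$, but it is not defined for rough $L^2$ data. I would therefore set
\begin{equation*}
\mathcal{I}_{RT} \coloneqq \Pi_{RT}^{\mathrm{can}} \circ S_{RT},
\end{equation*}
where $S_{RT}\colon L^2(\Omega;\mathbb{R}^d)\to L^2(\Omega;\mathbb{R}^d)$ is a local regularization, constructed by averaging against a locally supported partition of unity on an auxiliary macro--patch covering, that lifts $L^2$ data into a sufficiently smooth subspace on which $\Pi_{RT}^{\mathrm{can}}$ is well defined, while preserving $RT_\ell(\tria_x)$.

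The key step is to arrange $S_{RT}$ so that it commutes with $\textup{div}_x$ in the correct sense. Concretely, I would design a matching scalar smoother $S^0\colon L^2(\Omega)\to L^2(\Omega)$, both built from the same partition of unity and local $L^2$ projections, such that
\begin{equation*}
\textup{div}_x \, S_{RT}\, \tau = S^0\, \textup{div}_x\, \tau \quad\text{for all } \tau \in H(\textup{div}_x,\Omega),
\end{equation*}
and moreover $\Pi_{\mathcal{L}^0_\ell(\tria_x)} \circ S^0 = \Pi_{\mathcal{L}^0_\ell(\tria_x)}$ (which holds if $S^0$ preserves polynomials of degree $\ell$ on each element and has its range consist of elementwise polynomials). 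Combined with the commuting property of $\Pi_{RT}^{\mathrm{can}}$, this chains into
\begin{equation*}
\textup{div}_x \, \mathcal{I}_{RT}\, \tau
= \Pi_{\mathcal{L}^0_\ell(\tria_x)} \, \textup{div}_x\, S_{RT}\, \tau
= \Pi_{\mathcal{L}^0_\ell(\tria_x)} \, S^0\, \textup{div}_x\, \tau
= \Pi_{\mathcal{L}^0_\ell(\tria_x)} \, \textup{div}_x \, \tau.
\end{equation*}

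For the local approximation estimate, the $\gtrsim$ direction is immediate since $\mathcal{I}_{RT} p \in RT_\ell(\tria_x)$. For $\lesssim$, I would exploit that $\mathcal{I}_{RT}$ is a projection onto $RT_\ell(\tria_x)$ (which follows from polynomial preservation of $S_{RT}$ and the projection property of $\Pi_{RT}^{\mathrm{can}}$), has local domain of dependence $\omega_{K_x}$, and is $L^2$--stable on that patch. The standard Bramble--Hilbert / local best approximation argument then gives
\begin{equation*}
\lVert p - \mathcal{I}_{RT} p\rVert_{L^2(K_x)}
= \lVert (\mathrm{id}-\mathcal{I}_{RT})(p - p_h)\rVert_{L^2(K_x)}
\lesssim \min_{p_h \in RT_\ell(\tria_x)} \lVert p - p_h\rVert_{L^2(\omega_{K_x})},
\end{equation*}
and squaring and summing over $K_x \in \tria_x$ yields the claim, using only finite overlap of the patches.

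The main obstacle is the simultaneous handling of the commuting diagram and $L^2$--boundedness: the canonical $\Pi_{RT}^{\mathrm{can}}$ fails for $L^2$ data, so the regularizer $S_{RT}$ must be tailored so that $S_{RT}$ intertwines $\textup{div}_x$ with a scalar smoother $S^0$ and still leaves $\Pi_{\mathcal{L}^0_\ell(\tria_x)} \textup{div}_x$ invariant. In practice I would refer to an existing construction in the literature (such as Ern--Guermond's macroelement-based commuting quasi-interpolation or Falk--Winther's local bounded cochain projections) where all these compatibilities have already been checked.
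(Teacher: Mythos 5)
Your proposal takes essentially the same route as the paper: the paper's entire proof consists of citing the commuting quasi-interpolation construction in Ern--Guermond (Sec.~23), which is precisely the smoothing-plus-canonical-interpolant mechanism you sketch and to which you ultimately defer. The compatibilities you outline (a regularizer intertwining with $\textup{div}_x$, the projection property, and local $L^2$-stability yielding the patchwise best-approximation bound) are exactly the properties verified in that reference, so your argument is correct in the same sense the paper's is.
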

\begin{proof}
A suitable operator is investigated for example in \cite[Sec.~23]{ErnGuermond21}.
\end{proof}
We set the discrete subspace 
\begin{align*}
\Sigma_h \coloneqq \mathcal{L}^0_{k-1}(\tria_t;RT_\ell(\tria_x)) = \mathcal{L}^0_{k-1}(\tria_t) \otimes RT_\ell(\tria_x) \subset \Sigma.
\end{align*}
Moreover, we denote the $L^2$ orthogonal projections onto the space of piece-wise polynomials in time  $\mathcal{L}^0_{k-1}(\tria_t)$ and piece-wise polynomials in space $\mathcal{L}^0_\ell(\tria_x)$ by 
\begin{align*}
\Pi_{\mathcal{L}^0_{k-1}(\tria_t)} &\colon L^2(Q;\mathbb{R}^r)\to \mathcal{L}^0_{\ell-1}(\tria_t;L^2(\Omega;\mathbb{R}^r))\qquad\text{with }r\in \lbrace 1,d\rbrace,\\
\Pi_{\mathcal{L}^0_\ell(\tria_x)}& \colon L^2(Q)\to L^2(\mathcal{J};\mathcal{L}^0_\ell(\tria_x)).
\end{align*}
We set the interpolation operator 
\begin{align*}
\mathcal{I}^\otimes_\Sigma \coloneqq \Pi_{\mathcal{L}^0_{k-1}(\tria_t)} \circ  \mathcal{I}_{RT} \colon L^2(Q) \to \Sigma_h.
\end{align*}
\begin{theorem}[Commuting interpolation operator $\mathcal{I}^\otimes_\Sigma$]\label{thm:ISigma}
We have the commuting diagram property
\begin{align*}
\textup{div}_x\mathcal{I}^\otimes_\Sigma = \Pi_{\mathcal{L}^0_{k-1}(\tria_t)} \Pi_{\mathcal{L}^0_{\ell}(\tria_x)} \textup{div}_x.
\end{align*}
Moreover, we have for all $p \in L^2(Q;\mathbb{R}^d)$ the approximation property
\begin{align*}
\lVert p - \mathcal{I}^\otimes_\Sigma p\rVert_{L^2(Q)}^2 & \eqsim \sum_{K\in \tria} \min_{p_{x} \in L^2(K_t;RT_\ell(\tria_x))} \lVert p - p_{x} \rVert_{L^2(K_t;L^2(\omega_{K_x}))}^2\\
&\qquad\qquad + \min_{p_t \in \mathbb{P}_\ell(K_t;L^2(\omega_{K_x};\mathbb{R}^d))} \lVert p - p_{t} \rVert_{L^2(K_t;L^2(K_x))}^2.
\end{align*} 
\end{theorem}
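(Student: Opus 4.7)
The plan is to treat the two assertions in sequence, exploiting the tensor product structure of the operator $\mathcal{I}^\otimes_\Sigma = \Pi_{\mathcal{L}^0_{k-1}(\tria_t)} \circ \mathcal{I}_{RT}$: the first factor acts only in time (pointwise in space), while the second factor acts only in space (pointwise in time).

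For the commuting diagram, the key observation is that $\Pi_{\mathcal{L}^0_{k-1}(\tria_t)}$ and $\textup{div}_x$ commute, since the former is an $L^2$ projection in the time variable applied pointwise in $x$ whereas the latter is a differential operator in $x$ applied pointwise in $t$. Therefore
\begin{align*}
\textup{div}_x \mathcal{I}^\otimes_\Sigma = \textup{div}_x \Pi_{\mathcal{L}^0_{k-1}(\tria_t)} \mathcal{I}_{RT} = \Pi_{\mathcal{L}^0_{k-1}(\tria_t)} \textup{div}_x \mathcal{I}_{RT},
\end{align*}
and the commuting property of $\mathcal{I}_{RT}$ from Theorem~\ref{thm:Irt}, applied pointwise in time, yields the claim.

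For the approximation estimate, I would split the error via a telescoping triangle inequality,
\begin{align*}
p - \mathcal{I}^\otimes_\Sigma p = \bigl(p - \Pi_{\mathcal{L}^0_{k-1}(\tria_t)} p\bigr) + \Pi_{\mathcal{L}^0_{k-1}(\tria_t)}\bigl(p - \mathcal{I}_{RT} p\bigr),
\end{align*}
and bound each contribution on a cell $K = K_t\times K_x$. The first summand is a pure time error: $L^2(K_t)$-stability of the projection gives the best-approximation bound by the second term in the theorem (applied pointwise in $x$ and integrated). For the second summand I use $L^2$-stability of $\Pi_{\mathcal{L}^0_{k-1}(\tria_t)}$ to drop it, and then invoke the local approximation estimate from Theorem~\ref{thm:Irt} for $\mathcal{I}_{RT}$, integrated in time over $K_t$, to obtain the first summand of the theorem.

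The lower bound (i.e.\ the $\gtrsim$ direction) is an immediate best-approximation argument: since $\mathcal{I}^\otimes_\Sigma p|_K$ lies in $\mathbb{P}_{k-1}(K_t)\otimes RT_\ell(K_x)$, it is admissible in each of the two local minimization problems after restriction, so both minima are majorized by $\lVert p - \mathcal{I}^\otimes_\Sigma p\rVert_{L^2(K_t;L^2(\omega_{K_x}))}$, and summing over $K$ yields the claim up to a bounded overlap constant depending on the shape regularity of $\tria_x$. The main technical point, and the one worth care, is the interaction between the two projections in the upper bound: one must verify that composing the time projection with $\mathcal{I}_{RT}$ does not lose localization, which follows because $\Pi_{\mathcal{L}^0_{k-1}(\tria_t)}$ is a local operator on each $K_t$ and $\mathcal{I}_{RT}$ is local on the patch $\omega_{K_x}$, so the resulting patch is exactly $K_t \times \omega_{K_x}$ as stated.
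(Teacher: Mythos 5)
Your proposal is correct and follows essentially the same route as the paper's proof: the identical triangle-inequality splitting $p - \mathcal{I}^\otimes_\Sigma p = (p - \Pi_{\mathcal{L}^0_{k-1}(\tria_t)} p) + \Pi_{\mathcal{L}^0_{k-1}(\tria_t)}(p - \mathcal{I}_{RT} p)$, the $L^2$-stability of the time projection, the semi-discrete approximation properties, and Theorem~\ref{thm:Irt} for the commuting property. You merely spell out steps the paper leaves implicit (the commutation of $\Pi_{\mathcal{L}^0_{k-1}(\tria_t)}$ with $\textup{div}_x$, and the admissibility-plus-finite-overlap argument for the $\gtrsim$ direction), which is a faithful elaboration rather than a different approach.
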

\begin{proof}
Let $p \in L^2(Q;\mathbb{R}^d)$. The triangle inequality yields
\begin{align*}
\lVert p - \mathcal{I}^\otimes_\Sigma p\rVert_{L^2(Q)} &\leq \lVert p - \Pi_{\mathcal{L}^0_{k-1}(\tria_t)} p \rVert_{L^2(Q)} + \lVert \Pi_{\mathcal{L}^0_{k-1}(\tria_t)} (p - \mathcal{I}_{RT}p)\rVert_{L^2(Q)}\\
&\leq \lVert p - \Pi_{\mathcal{L}^0_{k-1}(\tria_t)} p \rVert_{L^2(Q)} + \lVert p - \mathcal{I}_{RT} p \rVert_{L^2(Q)}.
\end{align*}
The approximation properties of the semi-discrete operators $\Pi_{\mathcal{L}^0_{k-1}(\tria_t)}$ and $\mathcal{I}_{RT}$ lead to the approximation property in the theorem. Theorem~\ref{thm:Irt} implies the commuting diagram property.
\end{proof}
We set the discrete subspace $\Lambda_h \coloneqq X_h \otimes \Sigma_h$. By exploiting the tensor product structure we can define for each $(v,\tau) \in \Lambda$ an interpolation $(\mathcal{I}_X^\otimes v,\mathcal{I}^\otimes_\Sigma \tau) \in \Lambda_h$ with good approximation properties. 
In fact, a similar interpolation operator has been suggested in \cite{GantnerStevenson22}. We modify this ansatz to achieve additionally a commuting diagram property. The modification involves the application of the inverse Laplacian $(-\Delta_x)^{-1}\colon L^2(\mathcal{J};H^{-1}(\Omega)) \to L^2(\mathcal{J};H^1_0(\Omega))$ everywhere in time defined for all $\xi \in L^2(\mathcal{J};H^{-1}(\Omega))$ as solution operator to 
\begin{align*}
\langle \nabla_x (-\Delta_x)^{-1} \xi , \nabla_x w\rangle_\Omega = \langle \xi,w\rangle_\Omega\qquad\text{for all }w\in L^2(\mathcal{J};H^1_0(\Omega)).
\end{align*}
We set for all $(v,\tau)\in \Lambda$ the interpolation operator $\mathcal{I}^\otimes_\Lambda\colon \Lambda \to \Lambda_h$ as
\begin{align*}
\mathcal{I}_{\Lambda}(v,\tau) \coloneqq (\mathcal{I}_X^\otimes  v, \mathcal{I}_2(v,\tau))\text{ with }\mathcal{I}_2(v,\tau) = \mathcal{I}^\otimes_\Sigma \big(\tau-\nabla_x (-\Delta_x)^{-1} \partial_t (v - \mathcal{I}_X^\otimes  v)\big).
\end{align*}
\begin{theorem}[Commuting diagram property and approximablity of $\mathcal{I}^\otimes_\Lambda$]\label{thm:ILambda}
The projection $\mathcal{I}^\otimes_\Lambda$ onto $\Lambda_h$ commutes in the sense that for all $(v,\tau)\in \Lambda$
\begin{align*}
\textup{div}\, \mathcal{I}^\otimes_\Lambda(v,\tau) = \Pi_{\mathcal{L}^0_{k-1}(\tria_t)} \Pi_{\mathcal{L}^0_\ell(\tria_x)} \, \textup{div}\, (v,\tau).
\end{align*}
Moreover, we control for all $(v,\tau)\in X \times \Sigma$ the interpolation error by
\begin{align}\label{eq:bestApxIu}
\lVert \tau - \mathcal{I}_2(v,\tau)\rVert_{L^2(Q)} \lesssim \lVert \tau - \mathcal{I}^\otimes_\Sigma \tau \rVert_{L^2(Q)} + \lVert \partial_t (v-\mathcal{I}_X^\otimes  v)\rVert_{L^2(\mathcal{J};H^{-1}(\Omega))}.
\end{align}
\end{theorem}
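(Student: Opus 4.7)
My plan splits into three parts: projection, commuting diagram, and approximation bound.

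\textbf{Projection property.} I would first check that $\mathcal{I}^\otimes_\Lambda$ is idempotent. If $(v,\tau)\in\Lambda_h$, then $\mathcal{I}_X^\otimes v = v$ since $\mathcal{I}_X^\otimes$ is a projection (composition of projections $\mathcal{I}_x$ and $\mathcal{I}_t$), so $\partial_t(v-\mathcal{I}_X^\otimes v) = 0$ and $\mathcal{I}_2(v,\tau) = \mathcal{I}^\otimes_\Sigma\tau = \tau$ by the projection property inherited from Theorem~\ref{thm:ISigma}.

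\textbf{Commuting diagram.} I would expand
\begin{align*}
\textup{div}\,\mathcal{I}^\otimes_\Lambda(v,\tau) = \partial_t\mathcal{I}_X^\otimes v + \textup{div}_x\mathcal{I}_2(v,\tau),
\end{align*}
and use the commuting diagram of $\mathcal{I}^\otimes_\Sigma$ (Theorem~\ref{thm:ISigma}) together with the identity $\textup{div}_x\nabla_x(-\Delta_x)^{-1} = -\operatorname{Id}$ on $L^2(\mathcal{J};H^{-1}(\Omega))$ to rewrite
\begin{align*}
\textup{div}_x\mathcal{I}_2(v,\tau) = \Pi_{\mathcal{L}^0_{k-1}(\tria_t)}\Pi_{\mathcal{L}^0_\ell(\tria_x)}\bigl(\textup{div}_x\tau + \partial_t(v-\mathcal{I}_X^\otimes v)\bigr).
\end{align*}
The key observation is that $\partial_t\mathcal{I}_X^\otimes v$ lies in $\mathcal{L}^0_{k-1}(\tria_t;\mathcal{L}^1_{\ell,0}(\tria_x)) \subset \mathcal{L}^0_{k-1}(\tria_t;\mathcal{L}^0_\ell(\tria_x))$, hence it is fixed by $\Pi_{\mathcal{L}^0_{k-1}(\tria_t)}\Pi_{\mathcal{L}^0_\ell(\tria_x)}$. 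Substituting and using linearity of the projectors, the $\partial_t\mathcal{I}_X^\otimes v$ terms cancel and the desired identity $\textup{div}\,\mathcal{I}^\otimes_\Lambda(v,\tau) = \Pi_{\mathcal{L}^0_{k-1}(\tria_t)}\Pi_{\mathcal{L}^0_\ell(\tria_x)}\,\textup{div}\,(v,\tau)$ emerges.

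\textbf{Approximation bound.} Setting $\eta \coloneqq \nabla_x(-\Delta_x)^{-1}\partial_t(v-\mathcal{I}_X^\otimes v)$, a triangle inequality gives
\begin{align*}
\lVert \tau - \mathcal{I}_2(v,\tau)\rVert_{L^2(Q)} \leq \lVert \tau - \mathcal{I}^\otimes_\Sigma\tau\rVert_{L^2(Q)} + \lVert\mathcal{I}^\otimes_\Sigma\eta\rVert_{L^2(Q)}.
\end{align*}
The second term is controlled by $L^2$-stability of $\mathcal{I}^\otimes_\Sigma$ (immediate from its approximation property in Theorem~\ref{thm:ISigma} by taking $p_h=0$), giving $\lVert\mathcal{I}^\otimes_\Sigma\eta\rVert_{L^2(Q)} \lesssim \lVert\eta\rVert_{L^2(Q)}$. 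The defining identity of $(-\Delta_x)^{-1}$ yields pointwise in time the isometry $\lVert\nabla_x(-\Delta_x)^{-1}\xi\rVert_{L^2(\Omega)} = \lVert\xi\rVert_{H^{-1}(\Omega)}$, so integration over $\mathcal{J}$ gives $\lVert\eta\rVert_{L^2(Q)} = \lVert\partial_t(v-\mathcal{I}_X^\otimes v)\rVert_{L^2(\mathcal{J};H^{-1}(\Omega))}$, closing the estimate.

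The only subtle step is the commuting diagram: verifying that $\partial_t\mathcal{I}_X^\otimes v$ is invariant under the tensor-product $L^2$-projection, so that the correction term introduced via the inverse Laplacian ``fixes'' the mismatch between $\partial_t\mathcal{I}_X^\otimes v$ and $\Pi_{\mathcal{L}^0_{k-1}(\tria_t)}\Pi_{\mathcal{L}^0_\ell(\tria_x)}\partial_t v$. Everything else is a routine combination of the properties of $\mathcal{I}^\otimes_\Sigma$, $\mathcal{I}_X^\otimes$, and the inverse Laplacian.
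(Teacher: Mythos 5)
Your proof is correct and follows essentially the same route as the paper: the commuting diagram via the commuting property of $\mathcal{I}^\otimes_\Sigma$, the identity $\textup{div}_x\nabla_x(-\Delta_x)^{-1}=-\operatorname{Id}$, and the invariance of the discrete function $\partial_t\mathcal{I}_X^\otimes v\in\mathcal{L}^0_{k-1}(\tria_t)\otimes\mathcal{L}^1_{\ell,0}(\tria_x)$ under the tensor-product $L^2$ projection; then the error bound via triangle inequality, $L^2$ stability of $\mathcal{I}^\otimes_\Sigma$, and the isometry $\lVert\nabla_x(-\Delta_x)^{-1}\xi\rVert_{L^2(\Omega)}=\lVert\xi\rVert_{H^{-1}(\Omega)}$. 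Your explicit verification of idempotency and of the invariance step merely spells out what the paper leaves implicit, so the two arguments coincide in substance.
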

\begin{proof} Let $(v,\tau) \in \Lambda$.
The commuting diagram property of $\mathcal{I}^\otimes_\Sigma$ (Theorem~\ref{thm:ISigma}) and $- \textup{div}_x \nabla_x (-\Delta_x)^{-1} \partial_t (v -\mathcal{I}_X^\otimes  v) = \partial_t (v -\mathcal{I}_X^\otimes  v)$ yield
\begin{align*}
\textup{div}_x \mathcal{I}_2(v,\tau)& = \textup{div}_x \mathcal{I}^\otimes_\Sigma\big( \tau + \partial_t (v - \mathcal{I}_X^\otimes  u)\big)\\
& = \Pi_{\mathcal{L}^0_{k-1}(\tria_t)} \Pi_{\mathcal{L}^0_\ell(\tria_x)}
 \textup{div}_x \big( \tau - \nabla_x (-\Delta_x)^{-1} \partial_t (v -\mathcal{I}_X^\otimes  v)\big)\\
& = -\partial_t \mathcal{I}_X^\otimes  v + \Pi_{\mathcal{L}^0_{k-1}(\tria_t)} \Pi_{\mathcal{L}^0_\ell(\tria_x)} (\partial_t v + \textup{div}_x \tau).
\end{align*}
Hence, we have the commuting diagram property
\begin{align*}
\textup{div}\, \mathcal{I}^\otimes_\Lambda(v,\tau)& = \partial_t \mathcal{I}_X^\otimes  v + \textup{div}_x \mathcal{I}_2(v,\tau) = \Pi_{\mathcal{L}^0_{k-1}(\tria_t)} \Pi_{\mathcal{L}^0_\ell(\tria_x)} \textup{div}\, (v,\tau).
\end{align*}
The approximation property follows from an application of the triangle inequality and the $L^2$ stability of $\mathcal{I}_\Sigma$, that is, for all $(v,\tau) \in X \times \Sigma$
\begin{align*}
\lVert \tau - \mathcal{I}_2(v,\tau) \rVert_{L^2(Q)} & \lesssim \lVert \tau - \mathcal{I}^\otimes_\Sigma \tau \rVert_{L^2(Q)} + \lVert \nabla_x (-\Delta_x)^{-1} \partial_t (v -\mathcal{I}_X^\otimes  v) \rVert_{L^2(Q)}\\
& = \lVert \tau - \mathcal{I}^\otimes_\Sigma \tau \rVert_{L^2(Q)} + \lVert \partial_t (v -\mathcal{I}_X^\otimes  v) \rVert_{L^2(\mathcal{J};H^{-1}(\Omega))}.\qedhere
\end{align*}
\end{proof}
\begin{remark}[Smoothing rough-right hand sides]
The papers \cite{FuehrerHeuerKarkulik21,DieningStornTscherpel22,Fuehrer22} suggest smoothing of the right-hand side in least-squares and mixed formulations for the Poisson model problem to conclude optimal rates of convergence even with right-hand sides in $H^{-1}(\Omega)$. The key in the proof are suitable properties of the smoothing operator and the commuting diagram property of the operator $\mathcal{I}_{RT}$. Using the commuting diagram property of the operator $\mathcal{I}_\Lambda$ and using a suitable smoother (which results from the composition of $\Pi_{\mathcal{L}^0_k(\tria_t)}$ and the smoother for the Poisson model problem in space) lead to the same results for least-squares and mixed schemes for the heat equation.
\end{remark}
\section{Irregular meshes}\label{sec:IXirreg}
In this section we introduce an operator $\mathcal{I}_X\colon X \to X_h$ with locally in space-time refined underlying triangulation $\tria$. Such local refinements lead to irregular partitions as for example displayed in Figure~\ref{fig:BadMesh}. 
In order to have some local support of the nodal basis functions $(\phi_j)_{j\in \mathcal{N}} \subset X_h$ where $\mathcal{N}$ denotes the set of degrees of freedom in $X_h$, we need additional assumptions like the 1-irregular rule in \cite{GantnerStevenson22}. 
To avoid technicalities, we do not discuss the impact of these properties and rather state the following assumption.
\begin{itemize}
\item \textit{(Shape regularity)} All $d$-simplices $K_x$ with $K = K_t\times K_x \in \tria$ are shape regular.
\item \textit{(Local grading)} Let $K\in \tria$ and set $\nodes(K) \coloneqq \lbrace j\in \nodes \colon K \subset \textup{supp}(\phi_j) \rbrace$. We define the patch $\omega_K = \omega_{K,t} \times \omega_{K,x} \supset \bigcup_{j\in \nodes(K)} \textup{supp}(\phi_j)$ as the smallest cylinder that contains the support of all basis functions $\phi_j$ with $j\in \nodes(K)$. We assume that simplices $K'=K_t'\times K_x' \in \tria$ with $K' \subset \omega_K$ are of equivalent size in the sense that $|K'_t|\eqsim |K_t| \eqsim |\omega_{K,t}|$ and $|K'_x|\eqsim |K_x| \eqsim |\omega_{K,x}|^{1/d}$.
\end{itemize}  
Let $\Pi_K\colon X \to \mathbb{P}_k(K_t;\mathbb{P}_\ell(K_x))$ denote the $L^2(K)$ orthogonal projection onto the space $\mathbb{P}_k(K_t;\mathbb{P}_\ell(K_x))$ for all $K = K_t \times K_x\in \tria$. 
\begin{lemma}[Operator $\Pi_K$]\label{lem:PiK}
Let $K\in \tria$ and $v \in X$. We have
\begin{align*}
\lVert \nabla_x (v - \Pi_K v)\rVert_{L^2(K)}& \eqsim \min_{v_x \in L^2(K_t;\mathbb{P}_\ell(K_x))} \lVert \nabla_x (v - v_x)\rVert_{L^2(K_t;L^2(K_x))}\\
&\quad + \min_{V_t \in \mathbb{P}_k(K_t;L^2(K_x;\mathbb{R}^d))} \lVert \nabla_x v- V_t \rVert_{L^2(K_t;L^2(K_x))},\\
\lVert \partial_t (v - \Pi_K v)\rVert_{L^2(K)} &\eqsim \min_{\xi_x\in L^2(K_t;\mathbb{P}_{\ell}(K_x))} \lVert \partial_t v - \xi_x \rVert_{L^2(K_t;L^2(K_x))} \\
& \quad + \min_{v_t \in \mathbb{P}_k(K_t;L^2(K_x))} \lVert\partial_t( v - v_t)\rVert_{L^2(K_t;L^2(K_x))}.
\end{align*}
\end{lemma}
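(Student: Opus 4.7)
The plan is to exploit the tensor product structure of $\Pi_K$. Since $\Pi_K$ is the $L^2(K)$ orthogonal projection onto $\mathbb{P}_k(K_t) \otimes \mathbb{P}_\ell(K_x)$, it factorizes as $\Pi_K = \Pi_t \circ \Pi_x = \Pi_x \circ \Pi_t$, where $\Pi_t$ and $\Pi_x$ denote the $L^2$ orthogonal projections onto $\mathbb{P}_k(K_t)$ and $\mathbb{P}_\ell(K_x)$ respectively, extended pointwise in the other variable. The operators $\Pi_t$ and $\Pi_x$ commute with each other, with $\nabla_x$ commuting with $\Pi_t$ and $\partial_t$ commuting with $\Pi_x$. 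I would also rely on the $H^1$-stability of the $L^2$ projection onto polynomials on a shape regular simplex and on an interval, which follows from a standard scaling/Bramble--Hilbert argument and depends only on the polynomial degrees and the shape regularity of $K_x$.

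For the upper bound on $\lVert \nabla_x(v-\Pi_K v)\rVert_{L^2(K)}$, I would use the telescope
\begin{align*}
v - \Pi_K v = (v - \Pi_t v) + (I - \Pi_x)\Pi_t v.
\end{align*}
Applying $\nabla_x$ to the first summand and using $\nabla_x \Pi_t = \Pi_t \nabla_x$ turns it into $(I - \Pi_t)\nabla_x v$, whose $L^2(K)$ norm equals $\min_{V_t \in \mathbb{P}_k(K_t;L^2(K_x;\mathbb{R}^d))} \lVert \nabla_x v - V_t\rVert_{L^2(K)}$. For the second summand, for any candidate $v_x \in L^2(K_t;\mathbb{P}_\ell(K_x))$ commutativity gives $(I-\Pi_x)\Pi_t v_x = \Pi_t(I-\Pi_x)v_x = 0$, so $(I-\Pi_x)\Pi_t v = (I-\Pi_x)\Pi_t(v - v_x)$. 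The $H^1$-stability of $(I-\Pi_x)$ in space, followed by $\nabla_x \Pi_t = \Pi_t \nabla_x$ and $L^2$-stability of $\Pi_t$, yields
\begin{align*}
\lVert \nabla_x(I-\Pi_x)\Pi_t(v-v_x)\rVert_{L^2(K)} \lesssim \lVert \nabla_x \Pi_t(v-v_x)\rVert_{L^2(K)} \leq \lVert \nabla_x(v-v_x)\rVert_{L^2(K)},
\end{align*}
and taking the infimum over $v_x$ produces the remaining term. The analogous argument for $\partial_t$ uses the dual telescope $v - \Pi_K v = (v - \Pi_x v) + (I - \Pi_t)\Pi_x v$, with $\partial_t \Pi_x = \Pi_x \partial_t$ giving $(I - \Pi_x)\partial_t v$ directly, and with $H^1$-stability of $\Pi_t$ on the interval handling the second piece after inserting an arbitrary $v_t \in \mathbb{P}_k(K_t;L^2(K_x))$ and using $(I-\Pi_t)\Pi_x v_t = 0$.

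For the lower bound, both estimates are immediate: $\Pi_K v$ lies in $L^2(K_t;\mathbb{P}_\ell(K_x))$ and its gradient lies in $\mathbb{P}_k(K_t;L^2(K_x;\mathbb{R}^d))$, so $\Pi_K v$ is admissible for the first minimum and $\nabla_x \Pi_K v$ for the second, establishing both directions. The same observation works for the $\partial_t$ estimate.

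The main technical point is the $H^1$-stability of the $L^2$ projections $\Pi_x$ and $\Pi_t$ onto polynomials on a single shape-regular element, which is what converts pointwise best-approximation of the error into best-approximation of the derivative. Everything else reduces to commutativity of the tensor factors, $L^2$-stability, and inserting the right zero. I expect no substantial obstacle beyond carefully tracking which commutation identity is used in each summand.
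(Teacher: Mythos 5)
Your proof is correct and follows essentially the same route as the paper: both factor $\Pi_K$ into the commuting semi-discrete projections $\Pi_{\mathbb{P}_k(K_t)}$ and $\Pi_{\mathbb{P}_\ell(K_x)}$, telescope via the triangle inequality, commute $\nabla_x$ (resp.\ $\partial_t$) past the projection in the other variable, and reduce to $L^2$-stability plus single-element $H^1$-stability of the $L^2$ projection, with the lower bounds following from the trivial admissibility of $\Pi_K v$ and its derivatives. The only difference is cosmetic: the paper compresses your explicit insertion of $v_x$ and the $H^1$-stability step into the phrase ``approximation properties of the semi-discrete projection operators.''
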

\begin{proof}
Let $v\in X$ and let $K = K_t\times K_x\in \tria$. Recall the $L^2(K_t)$ orthogonal projector $\Pi_{\mathbb{P}_k(K_t)}$ defined in \eqref{eq:DefPit}. Let $\Pi_{\mathbb{P}_\ell(K_x)}$ denote the $L^2(K_x)$ orthogonal projection onto $\mathbb{P}_{\ell}(K_x)$.
Since $\Pi_K v = \Pi_{\mathbb{P}_k(K_t)} \Pi_{\mathbb{P}_\ell(K_x)} v$, we have
\begin{align*}
&\lVert \nabla_x ( v - \Pi_K v)\rVert_{L^2(K)}\\
&\qquad \leq \lVert \nabla_x (v - \Pi_{\mathbb{P}_k(K_t)} v) \rVert_{L^2(K)} +  \lVert \Pi_{\mathbb{P}_k(K_t)}\nabla_x (v - \Pi_{\mathbb{P}_\ell(K_x)} v) \rVert_{L^2(K)}\\
&\qquad\leq \lVert \nabla_x (v - \Pi_{\mathbb{P}_k(K_t)} v) \rVert_{L^2(K)} +  \lVert \nabla_x (v - \Pi_{\mathbb{P}_\ell(K_x)} v) \rVert_{L^2(K)}.
\end{align*}
Using the approximation properties of the semi-discrete projection operators leads to the first estimate in the lemma. Similar arguments  yield due to the identity $\Pi_K v = \Pi_{\mathbb{P}_\ell(K_x)} \Pi_{\mathbb{P}_k(K_t)}  v$ the second estimate.
\end{proof}
We assign to each degree of freedom $j\in \nodes$ a simplex $K(j)\in \tria$ with $j\in K$ and set the operator $\mathcal{I}_X \colon X \to X_h$ with
\begin{align*}
(\mathcal{I}_X v)(j) \coloneqq (\Pi_{K(j)} v)(j)\qquad\text{for all }v\in X\text{ and }j\in \nodes.
\end{align*}
\begin{theorem}[Interpolation operator $\mathcal{I}_X$]\label{thm:IXirreg}
The operator $\mathcal{I}_X$ is a projection onto $X_h$ that satisfies for all $v\in X$ 
\begin{align*}
&\lVert \nabla_x (v - \mathcal{I}_X v)\rVert_{L^2(K)} \lesssim \min_{v_h \in X_h} \lVert \nabla_x (v - v_h)\rVert_{L^2(\omega_K)} + \frac{h_t(K)}{h_x(K)}\, \lVert \partial_t v- \partial_t v_h \rVert_{L^2(\omega_K)}.
\end{align*}
Moreover, we have for all $v\in X$ 
\begin{align*}
\lVert \partial_t (v - \mathcal{I}_X v)\rVert_{L^2(K)}& \lesssim \min_{v_h \in X_h} \lVert\partial_t (v - v_h) \rVert_{L^2(\omega_K)}
+ \frac{h_x(K)}{h_t(K)} \lVert \nabla_x v - \nabla_x v_h\rVert_{L^2(\omega_K)}.
\end{align*}
\end{theorem}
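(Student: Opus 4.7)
The plan is to combine three ingredients: a local $L^2$-stability bound for $\mathcal{I}_X$, the projection property $\mathcal{I}_X|_{X_h}=\mathrm{id}$, and the $L^2$-type Poincar\'e inequality \eqref{eq:PoincareClassic} applied on the patch $\omega_K$.

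I would first establish the local stability estimate
\begin{align*}
\lVert \mathcal{I}_X w\rVert_{L^2(K)} \lesssim \lVert w\rVert_{L^2(\omega_K)}\qquad\text{for all }w\in L^2(\omega_K).
\end{align*}
Since $\phi_j|_K\neq 0$ only for $j\in \nodes(K)$, one has $(\mathcal{I}_X w)|_K = \sum_{j\in \nodes(K)} (\Pi_{K(j)} w)(j)\, \phi_j|_K$. A polynomial inverse inequality for point evaluation together with $L^2$-stability of $\Pi_{K(j)}$ yields $|(\Pi_{K(j)} w)(j)| \lesssim |K(j)|^{-1/2} \lVert w\rVert_{L^2(K(j))}$, while the uniform $L^\infty$-bound on nodal basis functions gives $\lVert \phi_j\rVert_{L^2(K)}\lesssim |K|^{1/2}$. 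The local grading hypothesis forces $|K(j)|\eqsim |K|$ for $j\in \nodes(K)$ (since $K(j)\subset \omega_K$) and bounds the cardinality of $\nodes(K)$, so the stability bound follows. Classical polynomial inverse estimates on the cell $K$ then upgrade it to
\begin{align*}
\lVert \nabla_x \mathcal{I}_X w\rVert_{L^2(K)} \lesssim h_x(K)^{-1}\lVert w\rVert_{L^2(\omega_K)},\quad\lVert \partial_t \mathcal{I}_X w\rVert_{L^2(K)} \lesssim h_t(K)^{-1}\lVert w\rVert_{L^2(\omega_K)}.
\end{align*}

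Using the projection property, for arbitrary $v_h\in X_h$ one has $v-\mathcal{I}_X v = (v-v_h) - \mathcal{I}_X(v-v_h)$, so the triangle inequality together with the previous display gives
\begin{align*}
\lVert \nabla_x(v-\mathcal{I}_X v)\rVert_{L^2(K)} \lesssim \lVert\nabla_x(v-v_h)\rVert_{L^2(\omega_K)} + h_x(K)^{-1}\lVert v-v_h\rVert_{L^2(\omega_K)}.
\end{align*}
Constants belong to $X_h$, so the minimisation over $v_h\in X_h$ includes all shifts $v_h+c$; hence one may assume $v-v_h$ has vanishing mean on $\omega_K$. Applying \eqref{eq:PoincareClassic} on the cylinder $\omega_K=\omega_{K,t}\times \omega_{K,x}$, whose temporal and spatial scales are equivalent to $h_t(K)$ and $h_x(K)$ by local grading, one obtains
\begin{align*}
\lVert v-v_h\rVert_{L^2(\omega_K)} \lesssim h_x(K)\,\lVert \nabla_x(v-v_h)\rVert_{L^2(\omega_K)} + h_t(K)\,\lVert \partial_t(v-v_h)\rVert_{L^2(\omega_K)}.
\end{align*}
Substituting and taking the infimum over $v_h\in X_h$ yields the first estimate; the second follows by the symmetric argument, using the $\partial_t$-stability bound and pulling out the factor $h_x(K)/h_t(K)$ in place of $h_t(K)/h_x(K)$.

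The main obstacle is the stability bound $\lVert \mathcal{I}_X w\rVert_{L^2(K)}\lesssim \lVert w\rVert_{L^2(\omega_K)}$: one must convert the scale-dependent factor $|K(j)|^{-1/2}\,|K|^{1/2}$ produced by the point-evaluation inverse inequality into an absolute constant, which is precisely what the local grading hypothesis accomplishes. The remaining steps are standard, and the application of \eqref{eq:PoincareClassic} is legitimate whenever the right-hand side of the theorem is finite, since $\partial_t v_h\in L^2(\omega_K)$ for $v_h\in X_h$ then forces $\partial_t v\in L^2(\omega_K)$.
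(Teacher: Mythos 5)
Your overall architecture (projection property, local $L^2$ stability of $\mathcal{I}_X$, inverse estimates, then an anisotropic Poincar\'e inequality on the patch) is a reasonable alternative to the paper's argument, which instead expands $\Pi_K\delta-\mathcal{I}_X\delta$ in a local dual basis and estimates the nodal functionals directly. However, your proof has a genuine gap at its pivotal step: the claim that ``constants belong to $X_h$'' is false. By definition $X_h\subset X\subset L^2(\mathcal{J};H^1_0(\Omega))$, so every $v_h\in X_h$ vanishes on the lateral boundary $\mathcal{J}\times\partial\Omega$, and $v_h+c\notin X_h$ for any $c\neq 0$. Consequently you cannot reduce to the case that $v-v_h$ has vanishing mean on $\omega_K$, and without that reduction your triangle-inequality step only yields the classical Cl\'ement-type bound
\begin{align*}
\lVert \nabla_x(v-\mathcal{I}_X v)\rVert_{L^2(K)} \lesssim \lVert\nabla_x(v-v_h)\rVert_{L^2(\omega_K)} + h_x(K)^{-1}\lVert v-v_h\rVert_{L^2(\omega_K)},
\end{align*}
with the negative power of the mesh size that the theorem is designed to avoid.

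The repair requires precisely the case distinction that the paper carries out. For cells whose patch stays away from $\mathcal{J}\times\partial\Omega$ you may subtract the patch mean \emph{inside the operator} rather than inside $X_h$: writing $c=\langle v-v_h\rangle_{\omega_K}$, the identity $(v-\mathcal{I}_X v)|_K=\bigl((\delta-c)-\mathcal{I}_X(\delta-c)\bigr)|_K$ holds provided $\mathcal{I}_X$ reproduces constants locally, i.e.\ $\sum_{j\in\nodes(K)}\phi_j|_K=1$ and $(\Pi_{K(j)}c)(j)=c$. This local partition-of-unity property is not automatic on irregular meshes: hanging nodes make some local degrees of freedom constrained, and one must check that the constraint coefficients sum to one, which is exactly the paper's identity \eqref{eq:Proofasdsadsaadaa}. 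For cells whose patch touches $\mathcal{J}\times\partial\Omega$, constant reproduction fails (basis functions are dropped there), and instead one uses that $\delta=v-v_h$ itself vanishes on that part of the boundary, so Friedrichs' inequality gives $\lVert\delta\rVert_{L^2(\omega_K)}\lesssim h_x(K)\lVert\nabla_x\delta\rVert_{L^2(\omega_K)}$ directly; this is the paper's Case~2, see \eqref{eq:ProofEnd}. A further, more minor, point: \eqref{eq:PoincareClassic} is stated on a single cell $K=K_t\times K_x$ with $K_x$ a simplex, so its use on the product patch $\omega_K=\omega_{K,t}\times\omega_{K,x}$ needs a short justification via shape regularity and the local grading hypothesis. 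With these two cases supplied, your stability-plus-Poincar\'e route does go through and would constitute a valid, somewhat more modular, alternative to the paper's dual-basis computation.
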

\begin{proof}
The projection property of $\mathcal{I}_X$ follows directly by its definition.
Let $v\in X$, $v_h \in X_h$, and $K = K_t\times K_x\in \tria$. Since $(\Pi_K v_h - \mathcal{I}_X v_h)|_K = 0$, we have with $\delta \coloneqq v - v_h$
\begin{align}\label{eq:Proofasdsadsa}
\lVert \nabla_x( v - \mathcal{I}_X v) \rVert_{L^2(K)} \leq \lVert\nabla_x( v - \Pi_K v)\rVert_{L^2(K)} + \lVert \nabla_x(\Pi_K \delta - \mathcal{I}_X \delta) \rVert_{L^2(K)}.
\end{align}
Let $\nodes_\textup{loc}(K)$ denote the degrees of freedom in $\mathbb{P}_{k}(K_t)\otimes \mathbb{P}_{\ell}(K_x)$ with associated basis functions $
b_{\gamma} = b_{\gamma,t}b_{\gamma,x}$, where $b_{\gamma,t} \in \mathbb{P}_k(K_t)$ and $b_{\gamma,x}\in \mathbb{P}_{\gamma}(K_x)$ are such that $b_{\gamma}(\beta) = \delta_{\gamma,\beta}$ for all $\gamma,\beta\in \nodes_\textup{loc}(K)$.
Then there exists a dual basis $b_{\gamma}^* = b_{\gamma,t}^*b_{\gamma,x}^*$ with  
$b_{\gamma,t}^* \in \mathbb{P}_k(K_t)$ and $b_{\gamma,x}^*\in \mathbb{P}_{\ell}(K_x)$ such that 
\begin{align*}
\Pi_K \delta = \sum_{\gamma \in \nodes_\textup{loc}(K)} \langle \delta , b^*_{\gamma}\rangle_Q b_\gamma.
\end{align*}
Hence, we have
\begin{align}\label{eq:ProofGradssad}
&\lVert\nabla_x( \Pi_K \delta - \mathcal{I}_X \delta) \rVert_{L^2(K)} \leq \sum_{\gamma \in \nodes_\textup{loc}(K)} |\langle \delta , b_\gamma^* \rangle_Q  - (\mathcal{I}_X \delta)(\gamma) |\, \lVert   \nabla_x b_\gamma \rVert_{L^2(K)}.
\end{align}
The values of $\mathcal{I}_X\delta$ at the local degree of freedom $\gamma \in \nodes_\textup{loc}(K)$ read as follows. 
\begin{itemize}
\item If $\gamma$ is not on the boundary $\mathcal{J}\times \partial \Omega$, the value of $\mathcal{I}_X$ at $\gamma$ depends on the values of $\mathcal{I}_X$ at some degrees of freedom $(j_\gamma^m)_{m = 1}^{N_{\gamma}}\subset \nodes(K)$ with some uniformly bounded number $N_{\gamma}\in \mathbb{N}$. More precisely, there exist coefficients $\alpha^m_{\gamma} \in \mathbb{R}$ with
\begin{align}\label{eq:Proofasdasd}
(\mathcal{I}_X w)(\gamma) = \sum_{m=1}^{N_{\gamma}} \alpha^m_{\gamma} (\mathcal{I}_Xw)(j_{\gamma}^m)\qquad\text{for all }w\in X.
\end{align}
For each basis function $j_{\gamma}^m \in \nodes(K)$ there exist by definition of $\mathcal{I}_X$ dual weight functions $(b_{\gamma}^m)^* = (b_{\gamma,t}^m)^*(b_{\gamma,x}^m)^*$ with $(b_{\gamma,t}^m)^*\in \mathbb{P}_k(K_t(j_\gamma^m))$ and $(b_{\gamma,x}^m)^*\in \mathbb{P}_\ell(K_x(j_\gamma^m))$ such that
\begin{align*}
(\mathcal{I}_X w)(\gamma) = \sum_{m=1}^{N_{\gamma}} \alpha^m_{\gamma} \langle w, (b_{\gamma}^m)^* \rangle_Q \qquad\text{for all }w\in X.
\end{align*}
\item If the local degree of freedom $\gamma$ is on the boundary $\mathcal{J} \times \partial \Omega$, we have $(\mathcal{I}_X \delta)(\gamma) = 0$ and set $N_\gamma \coloneqq 0$.
\end{itemize}
\textit{Case 1 (No dofs on boundary).} We suppose that a local degree of freedom $\gamma\in \nodes_\textup{loc}(K)$ is neither on the boundary $\mathcal{J} \times \partial \Omega$ nor it depends on some degree of freedom $j\in \nodes(K)$ on the boundary. Then \eqref{eq:Proofasdasd} as well as the fact that $\mathcal{I}_X$ preserves constant functions (away from the boundary) and $\sum_{j \in \nodes(K)} \phi_j|_K= 1$ yield 
\begin{align}\label{eq:Proofasdsadsaadaa}
\sum_{\gamma=1}^{N_{\gamma}} \alpha^\ell_{\gamma} = 1.
\end{align}
We set the integral means $\langle \delta \rangle_{\omega_{K,t}} \coloneqq \dashint_{\omega_{K,t}} \delta \ds$ and $\langle \delta \rangle_{\omega_{K,x}} \coloneqq \dashint_{\omega_{K,x}} \delta \dx$.
Combining \eqref{eq:Proofasdsadsaadaa} with $\langle 1 ,b_{\gamma,t}^*- (b^m_{\gamma,t})^* \rangle_\mathcal{J} = 0 = \langle 1 , (b^m_{\gamma,x})^* - b_{\gamma,x}^* \rangle_\Omega$ and scaling arguments yield
\begin{align}
\begin{aligned}
&|\langle \delta , b_\gamma^* \rangle_Q  - (\mathcal{I}_X \delta)(\gamma)|\, \lVert   \nabla_x b_\gamma \rVert_{L^2(K)} = \Big|\sum_{m=1}^{N_\gamma} \alpha^\ell_{\gamma}  \langle \delta , b_\gamma^* - (b_{\gamma}^m)^* \rangle_Q\Big|\,  \lVert   \nabla_x b_\gamma \rVert_{L^2(K)} \\
&\quad = \Big|\sum_{m=1}^{N_\gamma} \alpha^\ell_{\gamma} \langle \delta - \langle \delta \rangle_{\omega_{K,t}}, b_{\gamma,x}^* (b_{\gamma,t}^*- (b^m_{\gamma,t})^*)\rangle_Q \Big|\,  \lVert   \nabla_x b_\gamma \rVert_{L^2(K)}\\
&\quad \quad + \Big| \sum_{m=1}^{N_\gamma} \alpha^\ell_{\gamma} \langle \delta - \langle \delta \rangle_{\omega_{K,x}}, (b_{\gamma,x}^*-(b_{\gamma,x}^m)^* )(b^m_{\gamma,t})^*)\rangle_Q\Big| \,  \lVert   \nabla_x b_\gamma \rVert_{L^2(K)}\\
&\quad \lesssim \frac{h_t(K)}{h_x(K)} \lVert \partial_t \delta \rVert_{L^2(K)} + \lVert \nabla_x \delta \rVert_{L^2(K)}\qquad\text{for all }\gamma \in \nodes_\textup{loc}(K).
\end{aligned}
\end{align}
\textit{Case 2 (dof on boundary).} Suppose that a degree of freedom $j \in \nodes(K)$ is on the boundary $\mathcal{J} \times \partial \Omega$. Then the patch $\omega_K$ shares a face with the boundary and so  scaling arguments and Friedrichs' inequality lead for all $\gamma \in \nodes_\textup{loc}(K)$ to
\begin{align}\label{eq:ProofEnd}
\begin{aligned}
|\langle \delta , b_\gamma^* \rangle_Q  - (\mathcal{I}_X \delta)(\gamma)|\, \lVert   \nabla_x b_\gamma \rVert_{L^2(K)} &\leq  \Big| \Big\langle \delta , b_\gamma^* - \sum_{m=1}^{N_\gamma} \alpha^\ell_{\gamma}  (b_{\gamma}^m)^* \Big\rangle_Q\Big|\,  \lVert   \nabla_x b_\gamma \rVert_{L^2(K)} \\
& \lesssim \lVert \nabla_x \delta \rVert_{L^2(K)}. 
\end{aligned}
\end{align}
Combining \eqref{eq:Proofasdsadsa}--\eqref{eq:ProofEnd} and Lemma~\ref{lem:PiK} leads to the first bound in the lemma. The second follows similarly.  
\end{proof}
We proceed with a comparison of the interpolation error estimates on tensor product and irregular meshes. Let $K \in \tria$ be a time-space cell with $h_t \coloneqq h_t(K)$ and $h_x \coloneqq h_x(K)$ in a tensor product mesh (if we apply $\mathcal{I}^\otimes_X$) or in an irregular mesh. Due to Theorem~\ref{thm:paraPoincare}, \ref{thm:IX}, and \ref{thm:IXirreg} we have the stability and approximation properties
\begin{align}\label{eq:Convergenceasd}
\begin{aligned}
&\lVert \nabla_x( v - \mathcal{I} v)\rVert_{L^2(K)}\\
&\quad \lesssim \begin{cases}
\lVert \nabla_x v \rVert_{L^2(K)} + \frac{h_t}{h_x^2} \lVert \partial_t v\rVert_{L^2(K_t;H^{-1}(\omega_{K,x}))}&\text{for }\mathcal{I} = \mathcal{I}_X^\otimes,\\
h_x \lVert \nabla^2_x v \rVert_{L^2(K)} + \frac{h^2_t}{h_x^2} \lVert \partial_t^2 v\rVert_{L^2(K_t;H^{-1}(\omega_{K,x}))}&\text{for }\mathcal{I} = \mathcal{I}_X^\otimes,\\
\lVert \nabla_x v \rVert_{L^2(K)} + \frac{h_t}{h_x} \lVert \partial_t v\rVert_{L^2(\omega_K)}&\text{for }\mathcal{I} \in \lbrace \mathcal{I}_X^\otimes,\mathcal{I}_X\rbrace,\\
h_x\lVert \nabla^2_x v \rVert_{L^2(K)} + \frac{h_t}{h_x} \lVert \partial_t v\rVert_{L^2(\omega_K)}&\text{for }\mathcal{I} \in \lbrace \mathcal{I}_X^\otimes,\mathcal{I}_X\rbrace,\\
h_x \lVert \nabla^2_x v \rVert_{L^2(K)} + \frac{h^2_t}{h_x} \lVert \partial_t^2 v\rVert_{L^2(\omega_K)}&\text{for }\mathcal{I} \in \lbrace \mathcal{I}_X^\otimes,\mathcal{I}_X\rbrace,\\
h_x \lVert \nabla^2_x v \rVert_{L^2(K)} + h_t \lVert \partial_t \nabla_x v\rVert_{L^2(\omega_K)}&\\
\hphantom{h_x \lVert \nabla^2_x v \rVert_{L^2(K)}} + \frac{h^2_t}{h_x^2} \lVert \partial_t^2 v\rVert_{L^2(K_t;H^{-1}(\omega_{K,x}))}&\text{for }\mathcal{I} \in \lbrace \mathcal{I}_X^\otimes,\mathcal{I}_X\rbrace.
\end{cases} 
\end{aligned}
\end{align}
Despite a smaller domain of dependency with respect to time (neglected in the comparison above), the advantages of the operator $\mathcal{I}_X^\otimes$ are restricted to stability properties in $X$ rather than in 
$L^2(\mathcal{J};H^1_0(\Omega)) \cap H^1(\mathcal{J};L^2(\Omega))$. However, under reasonable regularity assumptions like in \eqref{eq:Reg1} and \eqref{eq:Reg2} both operators lead to the same approximation properties.  These properties suggest the following mesh scalings.
\begin{itemize}
\item If we only have \eqref{eq:Reg1}, the results suggest the parabolic scaling $h_t \eqsim h_x^2$.
\item If we only have \eqref{eq:Reg1} and \eqref{eq:Reg2}, the results suggest the scaling $h_t \eqsim h_x^{3/2}$.
\item If we additionally have $\partial_t^2 v\in L^2(Q)$, the results suggest the scaling $h_t \eqsim h_x$.
\end{itemize}
While Theorem~\ref{thm:IX} investigates the error $\partial_t (v - \mathcal{I}_X^\otimes v)$ in the $L^2(\mathcal{J};H^{-1}(\Omega))$ norm, Theorem~\ref{thm:IXirreg} investigates the error $\partial_t (v - \mathcal{I}_X v)$ in the $L^2(Q)$ norm.
Notice however, that similar arguments as in the proof of Theorem~\ref{thm:IX} allow us to conclude upper bounds for the $L^2(Q)$ norm of the interpolation error $\partial_t (v - \mathcal{I}_X^\otimes v)$ as well. This leads to the following comparison for $\mathcal{I} \in \lbrace \mathcal{I}_X^\otimes,\mathcal{I}_X\rbrace$, where the values in brackets are solely needed if $\mathcal{I} = \mathcal{I}_X$:
\begin{align*}
&\lVert \partial_t (v-\mathcal{I}v)\rVert_{L^2(K)}\\
& \quad\lesssim \begin{cases}
\lVert \partial_t v \rVert_{L^2(\omega_K)}& \big(+\frac{h_x}{h_t} \lVert \nabla_x v \rVert_{L^2(\omega_K)}\big),\text{ or}\\
h_x \lVert \partial_t \nabla_x v \rVert_{L^2(\omega_K)} + \frac{h_t}{h_x} \lVert \partial_t^2 v \rVert_{L^2(K_t;H^{-1}(\omega_{K,x}))} &\big(+\frac{h^2_x}{h_t} \lVert \nabla^2_x v \rVert_{L^2(\omega_K)}\big),\text{ or} \\
h_x \lVert \partial_t \nabla_x v \rVert_{L^2(\omega_K)} + h_t \lVert \partial_t^2 v \rVert_{L^2(\omega_K)} &\big(+\frac{h_x^2}{h_t} \lVert \nabla_x^2 v \rVert_{L^2(\omega_K)}\big).
\end{cases}
\end{align*}
Under the regularity assumptions in \eqref{eq:Reg1} and \eqref{eq:Reg2} the estimates show for both operators a reduced rate of convergence compared to \eqref{eq:Convergenceasd}. This can be expected, since we investigate the error with respect to the stronger $L^2(Q)$ norm. 
The combination of the error estimates with the regularity properties in \eqref{eq:Reg1} and \eqref{eq:Reg2} suggests a scaling $h_t \eqsim h_x^{3/2}$. If we grade the mesh too strongly, for example $h_t \eqsim h_x^2$, the operator $\mathcal{I}_X$ experiences, unlike the operator $\mathcal{I}_X^\otimes$, stability issues due to the terms 
\begin{align}\label{eq:OptiBadBounds}
 \frac{h_x}{h_t} \lVert \nabla_x v \rVert_{L^2(K)}
\qquad\text{or}\qquad \frac{h_x^2}{h_t} \lVert \nabla_x^2 v \rVert_{L^2(K)}.
\end{align}
Notice that unlike for operators on tensor meshes, such terms which do not depend on the time derivative $\partial_t v$ must occur in bounds for the interpolation error $\partial_t (v -\mathcal{I}_X v)$, since on irregular meshes the interpolated function $\mathcal{I}_X v$ might vary in time even so $v$ might be constant in time, that is, the property $\partial_t v = 0$ does in general not imply $\partial_t \mathcal{I}_X v = 0$. 
The following remark investigates this aspect in more detail.
\begin{remark}[Parabolic scaling vs.~ local refinements]\label{rem:paraVsLocal}
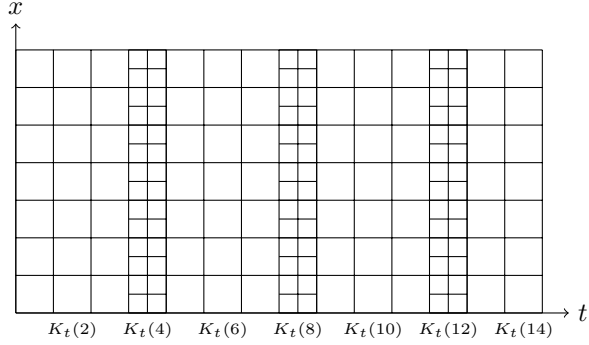
\begin{figure}
\begin{tikzpicture}[scale = 3.5]
\draw [step=1/7] (0.0,0.0) grid (2,1);
\node[right]  at (2.1, 0)   (a) {$t$};
\node[above]  at (0,1.1)   (b) {$x$};
\draw [->] (0.0,0.0) -- (a);
\draw [->] (0.0,0.0) -- (b);
\draw [step=1/14] (3/7,0.0) grid (4/7,1);
\draw [step=1/14] (7/7,0.0) grid (8/7,1);
\draw [step=1/14] (11/7,0.0) grid (12/7,1);
\node[below]  at (7/14,0)   (b) {\tiny $K_t(4)$};
\node[below]  at (3/14,0)   (b) {\tiny $K_t(2)$};
\node[below]  at (11/14,0)   (b) {\tiny $K_t(6)$};
\node[below]  at (15/14,0)   (b) {\tiny $K_t(8)$};
\node[below]  at (19/14,0)   (b) {\tiny $K_t(10)$};
\node[below]  at (23/14,0)   (b) {\tiny $K_t(12)$};
\node[below]  at (27/14,0)   (b) {\tiny $K_t(14)$};
\end{tikzpicture}
\caption{Locally refined meshes}\label{fig:BadMesh}
\end{figure}
Let $\mathcal{I}\colon X \to X_h$ be some locally defined interpolation operator with first order ansatz space $X_h = X_h^{1,1}$ and basis functions $(\phi_j)_{j\in \nodes}$. For simplicity we assume that the operator has weights $\phi_{j}^* \in X$ with $\textup{supp}(\phi_j^*) \subset \textup{supp}(\phi_j)$ and
\begin{align*}
\mathcal{I} v = \sum_{j\in \nodes} \langle v, \phi_{j}^*\rangle \phi_j\qquad\text{for all }v\in X.
\end{align*}
Moreover, we assume that these weights solely depend on the shape of the element patch. Let the underlying mesh result from refining a uniform tensor mesh $\tria_t \otimes \tria_x$ with $0<h_t = |K_t|$ for all $K_t\in \tria_t$ and $0<h_x = \textup{diam}(K_x)$ for all $K_x\in \tria_x$ in every fourth time interval $K_t(4),K_t(8),K_t(12),\dots \in \tria_t$ as depicted in Figure~\ref{fig:BadMesh}.
We can find a function $v\in X$ with $\partial_t v = 0$ such that $(\mathcal{I} v)|_{K_t(4m-2)\times \Omega} = 0$ equals zero on every $(4m-2)$-th time interval in $\tria_t$ with $m\in \mathbb{N}$ and $\phi(j) = 1$ for all degrees of freedom $j\in \nodes$ inside the refined area, that is inside $j\in \textup{int}(K_t(4m) \times \Omega)$. 
Scaling arguments lead to $
\lVert \nabla_x  v\rVert_{L^2(Q)} \eqsim h_x^{-1}$ and $\lVert \nabla^2_x  v\rVert_{L^2(Q)} \eqsim h_x^{-2}$.
By definition we have $\lVert \partial_t \mathcal{I}v\rVert_{L^2(\mathcal{J};H^{-1}(\Omega))} \eqsim \lVert \partial_t \mathcal{I}v\rVert_{L^2(Q))} \eqsim h_t^{-1}$.
Thus, the interpolation error reads
\begin{align*}
\lVert \partial_t(v- \mathcal{I}v)\rVert_{L^2(\mathcal{J};H^{-1}(\Omega))} \eqsim \lVert \partial_t (v-\mathcal{I}v) \rVert_{L^2(Q)} \eqsim \frac{h_x}{h_t} \lVert \nabla_x  v\rVert_{L^2(Q)}  \eqsim \frac{h^2_x}{h_t} \lVert \nabla_x  v\rVert_{L^2(Q)}.
\end{align*}
In this regard the terms in \eqref{eq:OptiBadBounds} cannot be avoided in interpolation error estimates for the time derivative on irregular meshes.
\end{remark}
\section{Conclusion}
This paper introduces interpolation operators and investigates their stability and (localized) approximation properties displayed in Theorem~\ref{thm:IX}, \ref{thm:ILambda}, and \ref{thm:IXirreg}. 
Their derivation led to the following observations.
\begin{itemize}
\item While it is possible to localize interpolation errors in the $H^{-1}(\Omega)$ as for example done in \cite{DieningStornTscherpel22}, it is not possible to localize the $L^2(\mathcal{J};H^{-1}(\Omega))$ error in space without introducing a negative power of the local mesh size as weight; see Remark~\ref{rem:LoclHmin}.
\item The parabolic \Poincare inequality in Theorem~\eqref{thm:paraPoincare} suggest a parabolic scaling $h_t(K) \eqsim h_x^2(K)$ for the interpolation of irregular functions $v \in X$. This scaling occurs also when we change the norm in our interpolation error estimates like in \eqref{eq:changeOfNorms}. Roughly speaking, this change of norms reads
\begin{align*}
\qquad\quad\lVert \partial_t \bigcdot \rVert_{L^2(K_t;H^{-1}(K_x))} \approx \frac{1}{h_t} \lVert \bigcdot \rVert_{L^2(K_t;H^{-1}(K_x))}  \approx \frac{h_x^2}{h_t} \lVert \nabla_x \bigcdot \rVert_{L^2(K_t;L^2(K_x))}.
\end{align*}
On irregular meshes, we have to use the $L^2(\mathcal{J};L^2(\Omega))$ norm to localize the error in the approximation of the time derivative. If we change the norm (which we have to do according to Remark~\ref{rem:paraVsLocal}), we observe roughly speaking
\begin{align*}
\qquad\quad\lVert \partial_t \bigcdot \rVert_{L^2(K_t;L^2(K_x))} \approx \frac{1}{h_t} \lVert \bigcdot \rVert_{L^2(K_t;L^2(K_x))}  \approx \frac{h_x}{h_t} \lVert \nabla_x \bigcdot \rVert_{L^2(K_t;L^2(K_x))}.
\end{align*}
This indicates that parabolic scaling occurs naturally for tensor product meshes but causes difficulties for irregular meshes. Remark~\ref{rem:paraVsLocal} underlines the latter observation.
\end{itemize}
All in all, we have shown that the localization of the $L^2(\mathcal{J};H^{-1}(\Omega))$ norm in space leads to some unavoidable difficulties, which can partially be overcome by assuming additional smoothness of the underlying function. However, interpolation operators $\mathcal{I}\colon X \to X_h$ cannot have the same beneficial properties as interpolation operators for elliptic problems. It is likely that similar difficulties occur in the numerical analysis of simultaneous space-time variational formulations, in particular when the underlying mesh is irregular. For example, to the authors' knowledge there exists no numerical scheme that leads to quasi-optimal approximations with respect to the norm in $X$ with underlying meshes that do not have some kind of tensor product structure; c.f.~\cite{StevensonWesterdiep20}. An exception are minimal residual methods  \cite{FuehrerKarkulik19,GantnerStevenson20,DieningStorn21}, which are quasi-optimal in a slightly stronger norm. This might indicate that the norm in $X$ is actually not well suited for adaptive numerical schemes and a remedy might be the use of alternative norms that are better suited for localization.

\printbibliography

\end{document}